\theoremstyle{plain}
\newtheorem{theorem}{Theorem}[section]
\newtheorem{lemma}[theorem]{Lemma}
\newtheorem{corollary}[theorem]{Corollary}
\newtheorem{proposition}[theorem]{Proposition}
\theoremstyle{definition}
\newtheorem{definition}[theorem]{Definition}
\newtheorem{remark}[theorem]{Remark}
\newtheorem{example}[theorem]{Example}
\numberwithin{equation}{section}
\begin{document}

\title[Weighted H\"{o}lder-Zygmund spaces and the wavelet transform]{New classes of weighted H\"{o}lder-Zygmund spaces and the  wavelet transform}
\author[S. Pilipovi\'{c}]{Stevan Pilipovi\'{c}}
\address{University of Novi Sad\\ Department
 of Mathematics and Informatics\\ Trg Dositeja Obradovi\' ca 4\\ 21000 Novi Sad \\ Serbia }
\email{pilipovics@yahoo.com}

\author[D. Raki\'{c}]{Du\v{s}an Raki\'{c}}
\address{University of Novi Sad\\ Faculty of Technology\\ Bul. Cara Lazara 1\\ 21000 Novi Sad \\ Serbia }
\email{drakic@tf.uns.ac.rs}

\author[J. Vindas]{Jasson Vindas}
\address{Department of Mathematics\\ Ghent University\\
Krijgslaan 281 Gebouw S22\\ B 9000 Gent\\ Belgium }
\email{jvindas@cage.ugent.be}


\subjclass[2000]{26A12, 26B35, 42C40, 46E35, 46F05}
\keywords{wavelet transform; Zygmund spaces; H\"{o}lder spaces;
distributions; quasiasymptotics; slowly varying functions; generalized Littlewood-Paley pairs}
\thanks{S. Pilipovi\'{c} acknowledges support by the Project 174024 of the Serbian Ministry of Education and Sciences}
\thanks{D. Raki\'{c} acknowledges support by the Project III44006 of the Serbian Ministry of Education and Sciences and by the Project 114-451-2167 of the Provincial Secretariat for Science and Technological Development}
\thanks{J. Vindas acknowledges support by a Postdoctoral Fellowship of the Research
Foundation--Flanders (FWO, Belgium)}

\begin{abstract}
We provide a new and elementary proof of the continuity theorem for the wavelet and
left-inverse wavelet transforms on the spaces $ \mathcal{S}_0
(\mathbb{R}^{n}) $ and $ \mathcal{S}(\mathbb {H}^{n+1})$. We then introduce and study a new class of weighted H\"{o}lder-Zygmund spaces, where the weights are regularly varying functions. The analysis of these spaces is carried out via the wavelet transform and generalized Littlewood-Paley pairs.
\end{abstract}
\maketitle

\section{Introduction}

The purpose of this article is two folded.  The main one is to
define and analyze a new class of weighted H\"{o}lder-Zygmund
spaces via the wavelet transform \cite{hol1,jaffard-meyer,meyer}.
It is well known \cite{hol94,hol1,horm,stein} that the wavelet
transforms of elements of the classical Zygmund space
$C^{\alpha}_{\ast}(\mathbb{R}^{n})$ satisfy the size estimate $
\left|\mathcal W_\psi f(x,y)\right|\leq Cy^{\alpha}$, which, plus a
side condition, essentially characterizes the space itself. We
will replace the regularity measurement $y^{\alpha}$ by weights
from the interesting class of regularly varying functions
\cite{bingham,seneta}. Familiar functions such as $y^{\alpha},
y^{\alpha}\left|\log y\right|^{\beta}, y^{\alpha}\left|\log
\left|\log y\right|\right|^{\beta}$, ..., are regularly varying.

The continuity of the wavelet transform and its left-inverse  on test function spaces \cite{hol93} plays a very important role when analyzing many function and distribution spaces \cite{hol1}, such as the ones introduced in this article. Our second aim is to provide a new proof of the continuity theorem, originally obtained in \cite{hol93}, for these transforms on the function spaces $\mathcal{S}_0 (\mathbb{R}^{n}) $ and $\mathcal{S}(\mathbb {H}^{n+1})$. Our approach to the proof is completely elementary and substantially simplifies the much longer original proof from \cite{hol93} (see also \cite[Chap. 1]{hol1}).

The definition of our weighted Zygmund spaces is based on the useful concept of (generalized) Littlewood-Paley pairs, introduced in Subsection \ref{LPpairs}, which generalizes the familiar notion of (continuous) Littlewood-Paley decomposition of the unity \cite{horm}. In addition, an important tool in our analysis is the use of pointwise weak regularity properties of vector-valued distributions and their (Tauberian) characterizations in terms of the wavelet transform \cite{vindas-pilipovic,vindas-p-r}. Even in the classical case $C_{\ast}^{\alpha}(\mathbb{R}^{n})$, our analysis provides a new approach to the study of H\"{o}lder-Zygmund spaces. It is then very likely that this kind of arguments might also be applied to study other types of smooth spaces, such as Besov type spaces. 

Our new classes of spaces $C^{\alpha,L}(\mathbb{R}^{n})$ and $C_{\ast}^{\alpha,L}(\mathbb{R}^{n})$, the $L$-H\"{o}lder and $L$-Zygmund spaces of exponent $\alpha$ that will be introduced in Section \ref{L-Zygmund}, contribute to the analysis of local regularity of functions or distributions by refining the regularity scale provided by the classical H\"{o}lder-Zygmund spaces. In fact, as explained in Section \ref{L-Zygmund}, they satisfy the useful inclusion relations:
$$
C^{\beta}(\mathbb{R}^{n})\subset
C^{\alpha,L}(\mathbb{R}^{n}) \subset C^{\gamma}(\mathbb{R}^{n}), \ \ \ \text{ when } 0 < \gamma < \alpha < \beta. $$
and 
$$
C^{\beta}_{\ast}(\mathbb{R}^{n})\subset
C^{\alpha,L}_{\ast}(\mathbb{R}^{n}) \subset C^{\gamma}_{\ast}(\mathbb{R}^{n}), \ \ \ \text{ if } \gamma < \alpha < \beta. $$
Situations in which these kinds of refinements are essential often occur in the literature and they have  already shown to be meaningful in applications. The particular instance $L(y)=|\log y|^{\beta}$ has been extensively studied (see, e.g., \cite[p. 276]{hol1}). Our analysis will treat more general weights; specifically, the important case when $L$ is a slowly varying function \cite{bingham,seneta}.

The paper is organized as follows. We review in Section \ref{snn} basic facts about test function spaces, the wavelet transform and its left-inverse, namely, the wavelet synthesis operator.
In Section \ref{continuityth}, we will provide the announced new proof of the continuity theorem for the wavelet and wavelet synthesis transforms when acting on test function spaces. We then explain in Section \ref{fn} some useful concepts that will be applied to the analysis of our weighted versions of the H\"{o}lder-Zygmund spaces; in particular, we shall discuss there the notion of (generalized) Littlewood-Paley pairs and some results concerning pointwise weak regularity of vector-valued distributions. Finally, we give the definition and study relevant properties of the new class of H\"{o}lder-Zygmund spaces in Section \ref{L-Zygmund}.

\section{Notation and notions}
\label{snn}
We denote by
$\mathbb{H}^{n + 1} = \mathbb{R}^n \times \mathbb{R}_+ $ the upper half-space. If $ x
\in \mathbb{R}^n $ and $ m \in \mathbb{N}^n $, then $|x|$ denotes the euclidean norm,
$x^{m} = x_1^{m_1} \dots x_n^{m_n},$
$\partial^{m}=\partial_x^{m} = \partial_{x_1}^{m_1} \dots\partial_{x_n}^{m_n},$
$m! = m_1! m_2!
\dots m_n! $ and $ |m| = m_1 + \dots + m_n. $
If the $ j$-th coordinate of $ m $ is one and the others vanish, we then write $
\partial_j=\partial_x^{m}.$ The set $B(0,r)$ is the euclidean ball in $\mathbb{R}^{n}$ of radius $r$. In the sequel, we use $C$ and $C'$ to denote positive
constants which may be different in various
occurrences.

\subsection{Function and distribution spaces}
The well known \cite{schwartz1} Schwartz space of rapidly decreasing smooth test functions is denoted by $\mathcal{S} (\mathbb{R}^{n})$. We will fix constants in the Fourier transform as
$\hat{\varphi}(\xi) = \int_{\mathbb{R}^{n}} \varphi(t) e^{-i\xi \cdot
t} \mathrm{d}t.$ The moments of
$\varphi\in\mathcal{S}(\mathbb{R}^{n})$ are denoted by $\mu_{m}(\varphi)=\int_{\mathbb{R}^{n}}t^{m}\varphi(t)\mathrm{d}t$,
$m\in\mathbb{N}^{n}$.

Following \cite{hol1}, the space of highly time-frequency localized functions $\mathcal{S}_0 (\mathbb{R}^{n})$ is defined as
$
\mathcal{S}_0 (\mathbb{R}^{n})=\left\{\varphi\in \mathcal{S}(\mathbb{R}^{n}): \: \mu_m (\varphi) = 0,\ \forall m \in \mathbb{N}^{n} \right\};
$
it is provided with the relative topology inhered from $\mathcal{S}(\mathbb{R}^{n}).$ In \cite{hol1}, the topology of $\mathcal{S}_{0}(\mathbb{R}^{n})$ is introduced in an apparently different way; however, both approaches are equivalent in view of the open mapping theorem.
Observe that $
\mathcal{S}_0 (\mathbb{R}^{n})$ is a closed subspace of $\mathcal{S}(\mathbb{R}^{n})$ and that $\varphi\in\mathcal{S}_{0} (\mathbb{R}^{n}) $ if and only if $ \partial^{m}\hat{\varphi}(0)= 0$ for all $m \in\mathbb{N}^{n}.$ It is important to point out that $
\mathcal{S}_0 (\mathbb{R}^{n})$ is also well known in the literature as the \emph{Lizorkin} space of test functions (cf. \cite{samko}).

The space $ \mathcal{S} (\mathbb {H}^{n+1}) $ of highly localized functions on the half-space \cite{hol1} consists of those $ \Phi \in C^{\infty} (\mathbb{H}^{n+1}) $ for which
$$\rho^0_{l,k,\nu,m}(\Phi)=\sup_{(x,y)\in
\mathbb {H}^{n+1}}\left(y^{l}+\frac
{1}{y^{l}}\right)\left(1+\left|x\right|^2\right)^{k/2}\,\left|\partial^{\nu}_{y}\partial_{x}^{m}\Phi
(x,y)\right|<\infty,$$ for all $l,k,\nu\in \mathbb{N}$
and $m\in\mathbb{N}^{n}$.
The canonical topology on $ \mathcal{S} (\mathbb {H}^{n+1}) $ is induced by this family of seminorms \cite{hol1}. For later use, we shall denote by $\rho_{k,m}$
the corresponding seminorms in $\mathcal{S}(\mathbb{R}^{n})$, namely,

$$\rho_{k,m}(\varphi)= \sup_{t\in
\mathbb {R}^{n}} \left(1+\left|t\right|^2\right)^{k/2}\,\left|\partial^{m}\varphi
(t)\right| ,\ \ \ k\in \mathbb{N},\ m\in \mathbb{N}^n.$$

The corresponding duals of these three spaces are $\mathcal{S}'(\mathbb{R}^{n}) $, $\mathcal{S}'_{0}(\mathbb{R}^{n}) $, and $\mathcal{S}'(\mathbb{H}^{n+1})$. They are, respectively, the spaces of tempered distributions, Lizorkin distributions, and distributions of slow growth on $\mathbb{H}^{n+1}$. Since the elements of $\mathcal{S}_{0}(\mathbb{R}^{n})$ are orthogonal to every polynomial, $\mathcal{S}'_{0}(\mathbb{R}^{n})$ can be canonically identified with the quotient space of $\mathcal{S}'(\mathbb{R}^{n})$ modulo polynomials.

Finally, we shall also make use of spaces of vector-valued tempered distributions \cite{silva,treves}. If $X$ is a locally convex topological vector space \cite{treves}, then the space of $X$-valued tempered distributions is $
\mathcal{S}'(\mathbb{R}^{n}, X) = L_{b}(\mathcal{S}
(\mathbb{R}^{n}), X)$, namely, the space of continuous linear mappings
from $\mathcal{S}(\mathbb{R}^{n})$ into $X$.

\subsection{Wavelet transform}
\label{subswt}

In this article a wavelet simply means a function $\psi\in\mathcal{S}(\mathbb{R}^{n})$ that satisfies
$\mu_{0}(\psi)=\int_{\mathbb{R}^{n}}\psi(t)\mathrm{d}t=0$.

The wavelet transform of
$ f \in
\mathcal{S}' (\mathbb{R}^{n})$ with respect to the wavelet $ \psi\in\mathcal{S}(\mathbb{R}^{n})$ is defined as
\begin{equation}
 \label{eq34} \mathcal{W}_{\psi} f (x, y) = \left\langle f(t), \frac{1}{y^n}
\bar{\psi}\left(\frac{t - x}{y}\right) \right\rangle = \frac{1}{y^n}\int_{\mathbb{R}^n} f(t)
 \bar{\psi}\left(\frac{t - x}{y}\right) \,\mathrm{d}t,
\end{equation}
where $(x,y)\in\mathbb{H}^{n+1}$. The very last integral formula is a formal notation which makes sense
when $f$ is a function of tempered growth. Notice that the wavelet transform is
also well defined via (\ref{eq34}) for $ f \in \mathcal{S}'_0
(\mathbb{R}^{n}) $ if the wavelet $ \psi \in
\mathcal{S}_0(\mathbb{R}^{n}).$ The wavelet transform can be defined exactly in the same way for vector-valued distributions.

\subsection{Wavelet synthesis operator}
\label{subswso}
Let $\eta \in \mathcal{S}_{0}(\mathbb{R}^{n})$. The wavelet synthesis transform of $\Phi\in \mathcal{S}(\mathbb{H}^{n+1})$ with respect to the wavelet $\eta$ is defined as
\begin{equation}
\label{wnwneq6} \mathcal{M}_{\eta} \Phi(t) = \int^{\infty}_{0}
\int_{\mathbb{R}^{n}} \Phi(x,y) \frac{1}{y^{n}} \eta
\left(\frac{t-x}{y} \right) \frac{\mathrm{d}x\mathrm{d}y}{y}\: , \
\ \ t \in \mathbb{R}^{n}.
\end{equation}
Observe that the operator $\mathcal{M}_{\eta}$ may be extended to act on the space $\mathcal{S}'(\mathbb{H}^{n+1})$ via duality arguments, see \cite{hol1} for details (cf. \cite{vindas-pilipovic} for the vector-valued case). In this paper we restrict our attention to its action on the test function space  $\mathcal{S}(\mathbb{H}^{n+1})$.

The importance of the wavelet synthesis operator lies in fact that it can be used to construct a left inverse for the wavelet transform, whenever the wavelet possesses nice reconstruction properties. Indeed, assume that $ \psi\in\mathcal{S}_{0} (\mathbb{R}^{n})$ admits a reconstruction wavelet $\eta\in\mathcal{S}_{0}(\mathbb{R}^{n})$. More precisely, it means that the constant
$$c_{\psi,\eta}=c_{\psi,\eta}(\omega) = \int^{\infty}_{0}
\overline{\hat{\psi}}(r\omega) \hat{\eta}(r\omega)
\frac{\mathrm{d}r}{r} \: , \ \ \ \omega\in\mathbb{S}^{n-1},
$$
is different from zero and independent of the direction $ \omega$. Then, a straightforward calculation \cite{hol1} shows that
\begin{equation}
\label{eqinv}
\mathrm{Id}_{\mathcal{S}_0(\mathbb{R}^{n})}=\frac{1}{c_{\psi,\eta}}
\mathcal{M}_{\eta}\mathcal{W}_{\psi}.
\end{equation}
It is worth pointing out that (\ref{eqinv}) is also valid \cite{hol1,vindas-pilipovic} when $\mathcal{W}_{\psi}$ and $\mathcal{M}_{\eta}$ act on the spaces $\mathcal{S}'_{0}(\mathbb{R}^{n})$ and $\mathcal{S}'(\mathbb{H}^{n+1})$, respectively.

Furthermore, it is very important to emphasize that a wavelet $\psi$ admits a reconstruction wavelet $\eta$ if and only if it is non-degenerate in the sense of the following definition \cite{vindas-pilipovic}:

\begin{definition}
\label{defnd} A test function
$\varphi\in\mathcal{S}(\mathbb{R}^{n})$ is said to be \emph{non-degenerate} if for
any $\omega\in\mathbb{S}^{n-1}$ the function of one variable
$R_{\omega}(r)=\hat{\varphi}(r\omega)\in C^{\infty}[0,\infty)$ is
not identically zero, that is, $ \operatorname*{supp}
R_{\omega}\neq\emptyset, $ for each $ \omega\in\mathbb{S}^{n-1}. $
\end{definition}

\section{The wavelet transform of test functions}
\label{continuityth}
The wavelet and wavelet synthesis transforms induce the bilinear mappings
\begin{equation*}
\mathcal{W}:(\psi,\varphi)\mapsto \mathcal{W}_{\psi}\varphi \ \ \ \text{and} \ \ \ \mathcal{M}:(\eta,\Phi)\mapsto \mathcal{M}_{\eta}\Phi.
\end{equation*}
Our first main result is a new proof of the continuity theorem for these two bilinear mappings when acting on test function spaces. Such a result was originally obtained by Holschneider \cite{hol93,hol1}. Our proof is elementary and significantly simpler than the one given in \cite{hol1}.

\begin{theorem} \label{th1} The two bilinear mappings
\begin{itemize}
\item [(i)] $\mathcal{W}:\mathcal{S}_0
(\mathbb{R}^n)\times \mathcal{S}_0
(\mathbb{R}^n)\rightarrow \mathcal{S} (\mathbb{H}^{n + 1})$
\item [(ii)] $ \mathcal{M}: \mathcal{S}_0
(\mathbb{R}^n)\times \mathcal{S} (\mathbb{H}^{n + 1})\rightarrow \mathcal{S}_0
(\mathbb{R}^n) $
\end{itemize}
are continuous.
\end{theorem}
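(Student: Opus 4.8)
The plan is to prove both continuity statements by direct seminorm estimates, exploiting in an essential way the vanishing moments of the test functions. The key device is the following elementary observation, which replaces the delicate arguments of \cite{hol93}: if $\varphi\in\mathcal{S}_{0}(\mathbb{R}^{n})$, then for every $N\in\mathbb{N}$ one can write $\varphi=\sum_{|j|=N}\partial^{j}\varphi_{j}$ with each $\varphi_{j}\in\mathcal{S}(\mathbb{R}^{n})$; equivalently, $\hat\varphi$ vanishes to infinite order at the origin, so $\hat\varphi(u)=\sum_{|j|=N}u^{j}\hat\varphi_{j}(u)$ with $\hat\varphi_{j}$ smooth and rapidly decreasing (a Hadamard-type factorization, proved by Taylor expansion of $\hat\varphi$ at $0$ together with the fact that the quotients stay in $\mathcal{S}$). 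Applying this to the wavelet $\psi$ (or to $\eta$) will produce the decay in the variable $y$, while applying it to $\varphi$ will produce decay as $y\to0^{+}$; the decay in $x$ and the smoothness estimates come from the standard Schwartz calculus.

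For part (i), I would start from $\mathcal{W}_{\psi}\varphi(x,y)=y^{-n}\int \varphi(t)\bar\psi((t-x)/y)\,\mathrm{d}t=(\varphi\ast\bar\psi_{y}^{\vee})(x)$ in suitable notation, and estimate $\rho^{0}_{l,k,\nu,m}(\mathcal{W}_{\psi}\varphi)$. Differentiating in $x$ is harmless (it falls on $\varphi$ or on $\psi$). For the factor $(1+|x|^{2})^{k/2}$: split the integral into $|t-x|\le |x|/2$ and $|t-x|>|x|/2$; on the first region $|t|$ is comparable to $|x|$ and the rapid decay of $\varphi$ wins, on the second region the rapid decay of $\psi$ (rescaled) wins, giving uniform control after absorbing powers of $y$. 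For the small-$y$ bound $y^{-l}$: write $\psi=\sum_{|j|=l+1}\partial^{j}\psi_{j}$, integrate by parts $l+1$ times, and each integration by parts pulls out one factor of $y$, yielding $O(y)$ as $y\to0^{+}$, hence boundedness of $y^{-l}|\mathcal{W}_{\psi}\varphi(x,y)|$ near $0$. For the large-$y$ bound $y^{l}$: symmetrically, write $\varphi=\sum_{|j|=l+1}\partial^{j}\varphi_{j}$ and integrate by parts so that derivatives fall on the rescaled $\bar\psi((t-x)/y)$, each producing a factor $y^{-1}$, which beats $y^{l}$ as $y\to\infty$. Differentiation in $y$ (the $\partial_{y}^{\nu}$) is handled by noting $\partial_{y}\big(y^{-n}\bar\psi((t-x)/y)\big)$ is again of the same type with $\psi$ replaced by a finite combination of $t\cdot\nabla\psi$ and $\psi$, all still in $\mathcal{S}_{0}$ (the zero-moment condition is preserved), so the previous estimates apply verbatim. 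Combining these gives continuity of $\mathcal{W}$, and bilinearity plus the quantitative dependence of the bounds on finitely many seminorms of $\psi$ and $\varphi$ gives joint continuity.

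For part (ii), the target space is $\mathcal{S}_{0}(\mathbb{R}^{n})$, so I must check both Schwartz estimates on $\mathcal{M}_{\eta}\Phi$ and the vanishing of all its moments. The moment condition is the easy part: $\widehat{\mathcal{M}_{\eta}\Phi}(u)=\int_{0}^{\infty}\hat\eta(yu)\,\big(\text{Fourier transform of }x\mapsto\Phi(x,y)\big)(u)\,\mathrm{d}y/y$, and since $\hat\eta$ vanishes to infinite order at $0$, so does $\widehat{\mathcal{M}_{\eta}\Phi}$ — one must only justify differentiation under the integral sign, using the $y^{l}+y^{-l}$ weights in the definition of $\mathcal{S}(\mathbb{H}^{n+1})$ to guarantee absolute convergence near $y=0$ and $y=\infty$. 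For the Schwartz seminorms $\rho_{k,m}(\mathcal{M}_{\eta}\Phi)$: differentiating in $t$ falls on $\eta$; the weight $(1+|t|^{2})^{k/2}$ is handled by the same region-splitting as before ($|t-x|$ small versus large), now using the rapid decay of $\eta$ and the $(1+|x|^{2})^{k'/2}$-decay of $\Phi$; and the convergence of the $y$-integral at both ends is again secured by choosing $l$ large in the bound $\rho^{0}_{l,k',\nu',m'}(\Phi)$ — near $y=0$ the rescaled $\eta((t-x)/y)$ together with a spare power of $y$ from $\mathrm{d}y/y$ and from Taylor-expanding $\eta$ gives integrability, near $y=\infty$ the factor $1/y^{l}$ in $\rho^{0}$ does it.

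\textbf{Main obstacle.} The routine Schwartz-space bookkeeping is voluminous but not deep; the genuinely delicate point is making the ``Hadamard factorization'' $\varphi=\sum_{|j|=N}\partial^{j}\varphi_{j}$ \emph{uniform}, i.e. bounding the seminorms of the $\varphi_{j}$ by seminorms of $\varphi$, since only then does one get \emph{continuity} (not merely well-definedness) of the bilinear maps. I expect to prove this factorization lemma separately, realizing $\varphi_{j}$ explicitly via iterated primitives (or via $\hat\varphi_{j}(u)=\hat\varphi(u)\,\overline{u^{j}}/|u|^{2N}\cdot(\text{smooth cutoff corrections})$, after checking the symbol $\overline{u^{j}}/|u|^{2N}$ times $\hat\varphi$ extends smoothly across $0$ precisely because all derivatives of $\hat\varphi$ vanish there), and tracking constants throughout; this is where the argument must be set up carefully, and it is the linchpin that lets the rest reduce to standard estimates.
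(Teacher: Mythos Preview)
Your outline is correct and would go through, but it takes a heavier route than the paper's, and in fact the paper's argument sidesteps precisely the step you flag as the ``main obstacle.''

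For the $y$-weights in part~(i) you propose a Hadamard-type factorization $\varphi=\sum_{|j|=N}\partial^{j}\varphi_{j}$ (or the analogous one for $\psi$) followed by integration by parts; you then rightly worry about establishing this factorization with continuous dependence of the seminorms of the $\varphi_{j}$ on those of $\varphi$. The paper bypasses this lemma entirely: it inserts the Taylor remainder directly into the estimate. For large $y$ it writes $\hat\varphi(\xi)=\sum_{|j|=l-n}\frac{\partial^{j}\hat\varphi(z_{\xi})}{j!}\,\xi^{j}$ (Lagrange form, valid since $\partial^{j}\hat\varphi(0)=0$ for all $j$) and bounds $|\partial^{j}\hat\varphi(z_{\xi})|\le \rho_{0,j}(\hat\varphi)$ pointwise; for small $y$ it Taylor-expands $\varphi(yt+x)$ about $x$ and uses $\mu_{m}(\psi)=0$ to kill the polynomial part. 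No auxiliary functions $\varphi_{j}$ are ever constructed, so no continuity of a factorization map needs to be proved; the seminorm bounds come out in one line.

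For the $x$-weight $(1+|x|^{2})^{k/2}$ you propose region splitting in the convolution integral; this works but couples the $x$- and $y$-estimates and requires some care. The paper instead passes to the Fourier side and uses the standard identity $(1+|x|^{2})^{k/2}e^{ix\cdot\xi}=(1-\Delta_{\xi})^{k/2}e^{ix\cdot\xi}$ (for even $k$), which after integration by parts reduces the problem to $k=0$ at the cost of replacing $\psi,\varphi$ by finite sums of functions of the same type; this cleanly decouples the spatial decay from the scale decay. The same Fourier/Laplacian trick is used for the $(1+|t|^{2})^{k/2}$ weight in part~(ii), in place of your region splitting.

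In short: both approaches exploit the infinite-order vanishing of moments, but the paper's direct Taylor-remainder argument is what makes the proof genuinely elementary, whereas your factorization lemma, though provable (e.g.\ via the integral Taylor remainder $\hat\varphi_{j}(u)=\tfrac{N}{j!}\int_{0}^{1}(1-s)^{N-1}\partial^{j}\hat\varphi(su)\,ds$), is an avoidable detour.
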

\begin{proof} \emph{Continuity of the wavelet mapping.} We will prove that for arbitrary $l, k, \nu
\in \mathbb{N}, m \in \mathbb{N}^n $, there exist $N\in\mathbb N$ and $C>0$ such that
\begin{equation}\label{con1}
\rho^0_{l,k,\nu,m}\left(\mathcal{W}_{\psi}\varphi\right)\leq C \sum_{i', |j'|,i, |j| \leq N}
\rho_{i', j'} ({\psi})\rho_{i, j} ({\varphi}).
\end{equation}
We begin by making some reductions. Observe that, for constants $c_{j}$ which do not depend on $\varphi$ and $\psi$,
\begin{align*}
\partial_y^{\nu} \partial_x^{m}\mathcal{W}_{\psi}\varphi(x,y)&= \partial_y^{\nu} \partial_x^{m}
\int_{\mathbb{R}^{n}}\varphi(yt+x)\bar{\psi}(t)\mathrm{d}t
\\
&
= \sum_{|j| \leq \nu} c_{j}
\int_{\mathbb{R}^{n}}\partial^{m+j}\varphi(yt + x) t^{j} \bar{\psi} (t)\mathrm{d}t
\\
&
=
\sum_{|j| \leq \nu} c_{j}\mathcal{W}_{\psi_{j}}\varphi_{m+j}(x,y),
\end{align*}
where $\psi_{j}(t)=t^{j} \psi (t) \in \mathcal{S}_0 (\mathbb{R}^n) $ and
$ \varphi_{m+j}=\partial^{m+j}\varphi\in \mathcal{S}_0 (\mathbb{R}^n) $. It is therefore enough to
show (\ref{con1}) for $ \nu =0$ and  $m=0.$ Next, we may assume that $k$ is even.  We then have, for constants $c_{r,s}$ independent of $\psi$ and $\varphi$,
\begin{align*}
(1 + |x|^2)^{k/2} \mathcal{W}_{\psi}\varphi(x, y)&= \frac{1}{(2 \pi)^n}\left\langle (1 - \Delta_\xi)^{k/2} e^{i
\xi\cdot x},\hat{\varphi}(\xi)
\overline{\hat{\psi}}(y \xi) \right\rangle
\\
&
=\frac{1}{(2 \pi)^n} \sum_{|r| + |s| \leq k} c_{r, s}y^{|s|}
\int_{\mathbb{R}^n}  e^{i
\xi\cdot x}\partial^{r}\hat{\varphi}(\xi)
\partial_{\xi}^{s}\overline{\hat{\psi}}(y\xi) \, \mathrm{d}\xi
\\
&
=\frac{1}{(2 \pi)^n} \sum_{|r| + |s| \leq k} c_{r, s}y^{|s|}\mathcal{W}_{\psi_{s}}\varphi_{r}(x,y),
\end{align*}
where $\varphi_{r}(t)= (-it)^{r}\varphi(t)$ and $\psi_{s}(t)=(it)^{s}\psi(t)$. Thus, it clearly suffices to establish (\ref{con1}) for $k=\nu=0$ and $m=0$. We may also assume that $l\geq n$.

We first estimate the term $ y^l |\mathcal{W}_{\psi}\varphi(x,y)|. $ Since $\partial^{j}\hat{\varphi}(0) = 0$ for every $ j\in \mathbb{N}^n$, we can apply the Taylor formula to obtain
\begin{equation*}
\hat{\varphi}(\xi)=\sum_{\left|j\right|=l-n}\frac{\partial^{j}\hat{\varphi}(z_{\xi})}{j!}\:\xi^{j}\: , \ \ \ \text{for some }z_{\xi} \text{ in the line segment } [0,\xi].
\end{equation*}
Hence,
\begin{align*}
y^l |\mathcal{W}_{\psi}\varphi(x,y)|& = \frac{y^{l}}{(2 \pi)^n} \left|\left\langle e^{i \xi \cdot x}
\hat{\varphi}(\xi), \overline{\hat{\psi}}(y \xi) \right\rangle\right|
\leq \frac{1}{(2 \pi)^n} \int_{\mathbb{R}^{n}}
|\hat{\varphi}(\xi)| y^l |\hat{\psi}(y
\xi)| \, \mathrm{d}\xi
\\
&
\leq \frac{1}{(2 \pi)^n} \sum_{\left|j\right|=l-n}\frac{1}{j!} \int_{\mathbb{R}^{n}}|\partial^{j}\hat{\varphi}(z_{\xi})|
 y^n |(y\xi)^{j}\hat{\psi}(y
\xi)|\, \mathrm{d}\xi
\\
&
\leq \frac{1}{(2 \pi)^n} \sum_{\left|j\right|=l-n}\frac{\rho_{0,j}(\hat{\varphi})}{j!} \int_{\mathbb{R}^{n}}|\xi^{j}\hat{\psi}(
\xi)| \, \mathrm{d}\xi
\\
&
\leq \frac{1}{(2 \pi)^n} \sum_{\left|j\right|=l-n}\frac{\rho_{0,j}(\hat{\varphi})\rho_{l+n,0}(\hat{\psi})}{j!} \int_{\mathbb{R}^{n}}\frac{\mathrm{d}\xi}{\left(1+\left|\xi\right|^{2}\right)^{n}}
\\
&
\leq
C \rho_{l+n,0}(\varphi)\sum_{\left|j\right|\leq 2l+2n}\rho_{2n,j}(\psi), \ \ \ \text{for all }(x,y)\in\mathbb{H}^{n+1}.
\end{align*}

It remains to estimate $ y^{-l} |\mathcal{W}_{\psi}\varphi (x,y)| $. We shall now use the fact that all
the moments of $\psi$ vanish. If we apply the Taylor formula, we have, for some $z_{t}=z(t,x,y)$ in the line segment $[x,yt]$,
\begin{align*}
\frac{1}{y^l} | \mathcal{W}_{\psi}\varphi (x,y)|& = \frac{1}{y^l} \left|\int_{\mathbb{R}^{n}} \varphi(yt+x) \bar{\psi} (t) \mathrm{d}t\right|
\\
&
=\frac{1}{y^l}\left| \int_{\mathbb{R}^{n}} \bar{\psi} (t)\left(\sum_{|j| < l}\frac{\partial^{j}\varphi(x)}{j!} (y t)^{j} +\sum_{|j| = l} \frac{\partial^{j}\varphi(z_{t})}{j!} (yt)^{j}\right) \mathrm{d}t\right|
\\
&
\leq\sum_{\left|j\right|=l}\frac{\rho_{0,j}(\varphi)}{j!}\int_{\mathbb{R}^{n}}(1+\left|t\right|^{2})^{l/2}|\psi(t)|\mathrm{d}t
\\
&
\leq C \rho_{l+2n,0}(\psi)\sum_{\left|j\right|=l}\rho_{0,j}(\varphi).
\end{align*}
The result immediately follows on combining the previous two estimates.

\emph{Continuity of the wavelet synthesis mapping.}
We should now
prove that for arbitrary $ k \in \mathbb{N}$ and $\kappa\in
\mathbb{N}^n$
there exist $N\in\mathbb N$ and $C>0$
such that
\begin{equation}\label{con2}
\rho_{k,\kappa}(\mathcal{M}_{\eta}\Phi)\leq C\underset{i,\left|j\right|\leq N}{\sum_{k_1,k_2,l,|m|\leq N}}
\rho_{i,j}(\eta)\rho^0_{k_1,k_2,l,m}(\Phi).
\end{equation}
Since $\partial_{t}^{\kappa} \mathcal{M}_{\eta}\Phi=\mathcal{M}_{\eta}\partial_{x}^{\kappa}\Phi$, it is enough to prove (\ref{con2}) for $ \kappa=0.$ We denote below $\hat{\Phi}$ the partial Fourier transform of $\Phi$ with respect to the space coordinate, i.e., $\hat{\Phi}(\xi,y)=\int_{\mathbb{R}^{n}}\Phi(x,y)e^{-i \xi\cdot x}\mathrm{d}x$. We may assume that $k$ is even.
We then have,
\begin{align*}
&(1 + |t|^2)^{\frac{k}{2}} |\mathcal{M}_{\eta} \Phi(t)|= (1 + |t|^2)^{\frac{k}{2}} \left|\int_{\mathbb{R}_+}
\int_{\mathbb{R}^n} \Phi(t - x, y) \, \frac{1}{y^n}\eta\left(\frac{x}{y}\right) \, \frac{\mathrm{d}x \mathrm{d}y}{y}\right|
\\
&
= \frac{(1 + |t|^2)^{\frac{k}{2}}}{(2 \pi)^n}
\left|\int_{\mathbb{R}_+} \int_{\mathbb{R}^n} \frac{(1 -
\Delta_{\xi})^{\frac{k}{2}} e^{i t \xi}}{(1 +
|t|^2)^{\frac{k}{2}}} \, \hat{\Phi}(\xi, y) \, \hat{\eta}(y
\xi) \, \frac{\mathrm{d}\xi \mathrm{d}y}{y}\right|
\\
&
\leq \frac{1}{(2\pi)^{n}} \int_{\mathbb{R}_+} \int_{\mathbb{R}^n} \left|(1 -
\Delta_{\xi})^{\frac{k}{2}} \left(\hat{\Phi}(\xi, y) \,
\hat{\eta}(y \xi)\right)\right| \, \frac{\mathrm{d}\xi \mathrm{d}y}{y}
\\
&
\leq \frac{1}{(2\pi)^{n}} \sum_{|r| + |s|
\leq k} c_{r, s}\int_{\mathbb{R}_+} \int_{\mathbb{R}^n}  |\partial^r_{\xi} \hat{\Phi}(\xi, y)| \, y^{|s|-1}
|\partial^{s}\hat{\eta} (y \xi)| \, \mathrm{d}\xi \mathrm{d}y
\\
&
\leq \frac{1}{(2\pi)^{n}} \sum_{|r| + |s|
\leq k} c_{r, s}\rho_{0,s}(\hat{\eta})\int_{\mathbb{R}_+} \int_{\mathbb{R}^n} y^{|s|-1} |\partial^r_{\xi} \hat{\Phi}(\xi, y)| \, \mathrm{d}\xi \mathrm{d}y
\\
&
\leq C'\sum_{|r| + |s|
\leq k}\rho_{0,s}(\hat{\eta})(\rho^{0}_{\left|s\right|-1, 2n,0 ,r}(\hat{\Phi})+\rho^{0}_{\left|s\right|+1, 2n,0 ,r}(\hat{\Phi}))\int_{\mathbb{R}_+} \int_{\mathbb{R}^n}  \frac{\mathrm{d}\xi}{(1 + |\xi|^2)^n}\, \frac{\mathrm{d}y}{1 + y^2}
\\
&
\leq C\underset{\left|j\right|\leq 2n}{\sum_{|r| + |s|
\leq k}}\rho_{\left|s\right|+2n,0}(\eta)(\rho^{0}_{\left|s\right|-1, |r|+2n,0 ,j}(\Phi)+\rho^{0}_{\left|s\right|+1, |r|+2n,0 ,j}(\Phi)).
\end{align*}
This completes the proof.\end{proof}
\begin{remark} It follows from the proof of the continuity of $\mathcal{M}$ that we can extend the bilinear mapping $\mathcal{M}:(\eta,\Phi)\mapsto \mathcal{M}_{\eta}\Phi$ to act on
$$\mathcal{M}:\mathcal{S}(\mathbb{R}^{n})\times \mathcal{S}(\mathbb{H}^{n+1})\rightarrow \mathcal{S}(\mathbb{R}^{n}),$$
 and it is still continuous.
\end{remark}
\section{Further notions}
\label{fn}
Our next task is to define and study the properties of a new class of weighted H\"{o}lder-Zygmund spaces. We postpone that for Section \ref{L-Zygmund}. In this section we collect some useful concepts that will play an important role in the next section.

\subsection{Generalized Littlewood-Paley pairs}
\label{LPpairs}
In our definition of weighted Zygmund spaces we shall employ a generalized Littlewood-Paley pair \cite{vindasLP}. They generalize those occurring in familiar (continuous) Littlewood-Paley decompositions of the unity (cf. Example \ref{ex1} below).

Let us start by introducing the index of non-degenerateness of wavelets, as defined in \cite{vindas-pilipovic}. Even if a wavelet is non-degenerate, in the sense of Definition \ref{defnd}, there may be a ball on which its Fourier transform ``degenerates''. We measure in the next definition how big that ball is.

\begin{definition}
\label{defind} Let $\psi\in\mathcal{S}(\mathbb{R}^{n})$ be a non-degenerate wavelet. Its \emph{index of non-degenerateness} is the (finite) number
$$
\tau=\inf\left\{r\in\mathbb{R}_{+}:\:\operatorname*{supp}
R_{\omega}\cap[0,r]\neq \emptyset,
\forall\omega\in\mathbb{S}^{n-1}\right\},
$$
where $R_{\omega}$ are the functions of one variable $R_{\omega}(r)=\hat{\psi}(r\omega)$.
\end{definition}

If we only know values of $\mathcal{W}_{\psi}f(x,y)$ at scale $y<1$, then the wavelet transform can be blind when analyzing certain distributions (cf. \cite[Sec. 7.2]{vindas-pilipovic}. The idea behind the introduction of Littlewood-Paley pairs is to have an alternative way for recovering such a possible lost of information by employing additional data with respect to another function $\phi$ (cf. \cite{vindasLP}).

\begin{definition} \label{def5} Let $ \alpha\in\mathbb{R}$, $\phi \in\mathcal{S}(\mathbb{R}^{n})$. Let $ \psi \in {\mathcal S} (\mathbb{R}^n) $ be a non-degenerate
wavelet with the index of non-degenerateness $ \tau.$ The pair $(\phi,\psi)$ is said to be a \emph{Littlewood-Paley pair} (LP-pair) of order $\alpha$ if  $ \hat{\phi} (\xi) \neq 0 $ for $|\xi|\leq \tau $ and
$\mu_{m} (\psi) = 0$ for all multi-index $m\in\mathbb{N}^{n}$ with $\left| m \right|\leq [\alpha] $.
\end{definition}
\begin{example} \label{ex1} Let $ \phi \in {\mathcal S}
(\mathbb{R}^n) $ be a radial function such that $ \hat{\phi} $ is nonnegative, $
\hat{\phi} (\xi) = 1$ for $|\xi| < 1/2$ and $\hat{\phi}(\xi)=0$ for $|\xi| >1$. Set $ \hat{\psi} (\xi) = -
\xi\cdot \nabla\hat{\varphi}(\xi).$ The pair $(\phi, \psi) $ is then clearly a LP-pair of order $\infty$. Observe that this well known pair is the one used in the so-called Littlewood-Paley decompositions of the unity and plays a crucial role in the study of various function spaces, such as the classical Zygmund space $ C_{\ast}^{\alpha}(\mathbb{R}^{n})$ (cf., e.g., \cite{horm}).
\end{example}

We pointed out above that LP-pairs enjoy powerful reconstruction properties. Let us make this more precise.

\begin{proposition}
\label{pLP} Let $(\phi,\psi)$ be a LP-pair, the wavelet $\psi$ having index of non-degenerateness $\tau$ and $r>\tau$ being a number such that $\hat{\phi}(\xi)\neq 0$ for $\left|\xi\right|<r$. Pick any $\sigma$ in between $\tau$ and $r$. If $\eta\in\mathcal{S}_{0}(\mathbb{R}^{n})$ is a reconstruction wavelet for $\psi$ whose Fourier transform has support in $B(0,\sigma)$ and $\varphi\in\mathcal{D}(\mathbb{R}^{n})$ is such that $\varphi(\xi)=1$ for $\xi\in B(0,\sigma)$ and $\operatorname*{supp}\varphi\subset B(0,r)$, then, for all $f\in\mathcal{S}'(\mathbb{R}^{n})$ and $\theta\in\mathcal{S}(\mathbb{R}^{n})$
\begin{equation}
\label{eqLP}
\left\langle f,\theta \right\rangle= \int_{\mathbb{R}^{n}}(f\ast \phi)(t) \theta_{1}(t)\mathrm{d}t\: + \:\frac{1}{c_{\psi,\eta}}\int_{0}^{1}\int_{\mathbb{R}^{n}} \mathcal{W}_{\psi}f(x,y)\mathcal{W}_{\bar{\eta}}\theta_{2}(x,y)\frac{\mathrm{d}x\mathrm{d}y}{y}
,\end{equation}
where $\hat{\theta}_{1}(\xi)=\hat{\theta}(\xi)\varphi(\xi)/\hat{\phi}(-\xi)$ and $\hat{\theta}_{2}(\xi)=\hat{\theta}(\xi)(1-\varphi(\xi))$.
\end{proposition}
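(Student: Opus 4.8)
The plan is to prove the reconstruction formula \eqref{eqLP} by passing to the Fourier transform and splitting the identity operator on $\mathcal{S}(\mathbb{R}^{n})$ according to a frequency partition associated with the cutoff $\varphi$. First I would observe that, since $\hat\varphi$ is supported in $B(0,r)$ and $\hat\phi(\xi)\neq 0$ on that ball, the function $\theta_{1}$ defined by $\hat\theta_{1}(\xi)=\hat\theta(\xi)\varphi(\xi)/\hat\phi(-\xi)$ is a well-defined element of $\mathcal{S}(\mathbb{R}^{n})$; and $\theta_{2}$, defined by $\hat\theta_{2}(\xi)=\hat\theta(\xi)(1-\varphi(\xi))$, vanishes on $B(0,\sigma)$, hence in particular near the origin, so $\theta_{2}\in\mathcal{S}_{0}(\mathbb{R}^{n})$ (all its moments vanish). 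The decomposition $\hat\theta=\hat\theta_{1}\cdot\widehat{\phi}(-\cdot)+\hat\theta_{2}$ is immediate from $\varphi+(1-\varphi)=1$, which on the distributional side reads $\theta=\theta_{1}\ast\check{\phi}+\theta_{2}$ for a suitable reflection $\check\phi$; the first term will produce the convolution piece $\int (f\ast\phi)(t)\theta_{1}(t)\,\mathrm{d}t$ after pairing with $f$ and using the standard transpose relation $\langle f,\sigma\ast\check\phi\rangle=\langle f\ast\phi,\sigma\rangle$.

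The second, and main, step is to handle the term $\langle f,\theta_{2}\rangle$. Since $\theta_{2}\in\mathcal{S}_{0}(\mathbb{R}^{n})$ and $\psi$ admits the reconstruction wavelet $\eta\in\mathcal{S}_{0}(\mathbb{R}^{n})$, the inversion identity \eqref{eqinv} gives $\theta_{2}=\frac{1}{\overline{c_{\psi,\eta}}}\mathcal{M}_{\bar\eta}\mathcal{W}_{\bar\psi}\theta_{2}$ (I would be careful with conjugates here, choosing the version of \eqref{eqinv} adapted to the pairing $\langle f,\cdot\rangle$ rather than the Hermitian inner product). Substituting and using Fubini — justified because $\mathcal{W}_{\bar\psi}\theta_{2}\in\mathcal{S}(\mathbb{H}^{n+1})$ by Theorem \ref{th1}(i) and $f$ is tempered — yields
\begin{equation*}
\langle f,\theta_{2}\rangle=\frac{1}{c_{\psi,\eta}}\int_{0}^{\infty}\int_{\mathbb{R}^{n}}\mathcal{W}_{\psi}f(x,y)\,\mathcal{W}_{\bar\eta}\theta_{2}(x,y)\,\frac{\mathrm{d}x\,\mathrm{d}y}{y}.
\end{equation*}
The crucial point is that this integral may be truncated at $y=1$: for $y\geq 1$ the integrand vanishes identically. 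Indeed, on the frequency side $\mathcal{W}_{\bar\eta}\theta_{2}(x,y)$ is, up to constants, the inverse Fourier transform in $x$ of $\hat\theta_{2}(\xi)\,\overline{\hat\eta(y\xi)}$; but $\operatorname{supp}\hat\eta\subset B(0,\sigma)$ forces $\hat\eta(y\xi)=0$ unless $|\xi|<\sigma/y$, while $\hat\theta_{2}$ vanishes on $B(0,\sigma)$. For $y\geq 1$ these two support constraints are incompatible, so $\mathcal{W}_{\bar\eta}\theta_{2}(x,y)=0$ for all $x$ when $y\geq 1$. This is exactly where the interplay between the index of non-degenerateness $\tau$, the radius $r$, and the intermediate choice $\sigma$ enters: it is what makes $\operatorname{supp}\hat\eta$ and $\operatorname{supp}\hat\theta_{2}$ disjoint after the rescaling.

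I expect the main obstacle to be bookkeeping rather than conceptual: getting the conjugates and reflections consistent across the three pairings (the Schwartz pairing $\langle\cdot,\cdot\rangle$ versus the Hermitian conventions built into $\mathcal{W}_\psi$, $\mathcal{M}_\eta$, and $c_{\psi,\eta}$), and rigorously justifying the interchange of the $\langle f,\cdot\rangle$ pairing with the $\mathrm{d}x\,\mathrm{d}y/y$ integration. The latter is clean because $\theta_{2}\mapsto\mathcal{W}_{\bar\psi}\theta_{2}$ maps into $\mathcal{S}(\mathbb{H}^{n+1})$ continuously and $\mathcal{M}_{\bar\eta}$ maps back continuously into $\mathcal{S}_{0}(\mathbb{R}^{n})$ by Theorem \ref{th1}, so the composition applied to $\theta_{2}$ converges as a Bochner-type integral in $\mathcal{S}(\mathbb{R}^{n})$, allowing $f$ to be moved inside. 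Once the truncation at $y=1$ is established, combining the two pieces gives \eqref{eqLP} exactly.
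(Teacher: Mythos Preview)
Your strategy is correct and essentially identical to the paper's: split $\theta$ via $\varphi$, handle the low-frequency piece by the transpose of convolution, reconstruct $\theta_{2}$ via wavelet inversion, exchange the pairing with the $\mathrm{d}x\,\mathrm{d}y/y$ integral using Theorem~\ref{th1}, and truncate at $y=1$ by the support incompatibility between $\hat\eta$ and $\hat\theta_{2}$. One concrete bookkeeping point to fix (beyond conjugates): the inversion you invoke, $\theta_{2}=\tfrac{1}{\overline{c_{\psi,\eta}}}\mathcal{M}_{\bar\eta}\mathcal{W}_{\bar\psi}\theta_{2}$, after moving $f$ inside produces $\int \mathcal{W}_{\eta}f\cdot\mathcal{W}_{\bar\psi}\theta_{2}$, not the desired $\int \mathcal{W}_{\psi}f\cdot\mathcal{W}_{\bar\eta}\theta_{2}$, and your truncation argument would then fail since $\hat\psi$ need not be compactly supported; the correct variant (which the paper uses) analyzes $\theta_{2}$ with $\bar\eta$ and synthesizes with $\bar\psi$, i.e., $\theta_{2}=\tfrac{1}{c_{\psi,\eta}}\int_{0}^{\infty}\!\int \tfrac{1}{y^{n}}\bar\psi\!\left(\tfrac{t-x}{y}\right)\mathcal{W}_{\bar\eta}\theta_{2}(x,y)\,\tfrac{\mathrm{d}x\,\mathrm{d}y}{y}$, which after pairing with $f$ gives exactly \eqref{eqLP}.
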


\begin{proof}
Observe that
$$
\left\langle f\ast\phi,\theta_{1}\right\rangle= (2\pi)^{-n}\langle \hat{f}(\xi),\hat{\theta}(-\xi)\varphi(-\xi)\rangle.
$$
It is therefore enough to assume that $\theta_{1}=0$ so that $\theta=\theta_{2}$. Our assumption over $\eta$ is that $\eta\in\mathcal{S}_{0}(\mathbb{R}^{n})$, $\operatorname*{supp} \hat{\eta}\subset B(0,\sigma),$ and
$$
c_{\psi,\eta}=\int_{0}^{\infty}\overline{\hat{\psi}}(r\omega)\hat{\eta}(r\omega)\frac{\mathrm{dr}}{r}\neq 0
$$
does not depend on the direction $\omega$. We remark that such a
reconstruction wavelet can always be found (see the proof of
\cite[Thrm. 7.7]{vindas-pilipovic}). Therefore,
$\mathcal{W}_{\bar{\eta}}\theta(x,y)=0$ for all
$(x,y)\in\mathbb{R}^{n}\times(1,\infty)$. Exactly as in \cite[p.
66]{hol1}, the usual calculation shows that
$$
\theta(t)=\frac{1}{c_{\psi,\eta}}\mathcal{M}_{\bar{\psi}}(\mathcal{W}_{\bar{\eta}}\theta)(t)=\frac{1}{c_{\psi,\eta}}\int_{0}^{1}\int_{\mathbb{R}^{n}}\bar{\psi}\left(\frac{t-x}{y}\right) \mathcal{W}_{\bar{\eta}}\theta(x,y)\frac{\mathrm{d}x\mathrm{d}y}{y}.
$$
Furthermore, since $\mathcal{W}_{\bar{\eta}}\theta\in \mathcal{S}(\mathbb{H}^{n+1})$ (cf. Theorem \ref{th1}), the last integral can be expressed as the limit in $\mathcal{S}(\mathbb{R}^{n})$ of Riemann sums. That justifies the exchange of dual pairing and integral in
\begin{align*}
\left\langle f,\theta \right\rangle&=\left\langle f(t),\frac{1}{c_{\psi,\eta}}\int_{0}^{1}\int_{\mathbb{R}^{n}}\bar{\psi}\left(\frac{t-x}{y}\right) \mathcal{W}_{\bar{\eta}}\theta(x,y)\frac{\mathrm{d}x\mathrm{d}y}{y}\right\rangle
\\
&
=\frac{1}{c_{\psi,\eta}}\int_{0}^{1}\int_{\mathbb{R}^{n}} \mathcal{W}_{\psi}f(x,y)\mathcal{W}_{\bar{\eta}}\theta(x,y)\frac{\mathrm{d}x\mathrm{d}y}{y}
\end{align*}
\end{proof}

\subsection{Slowly varying functions}
\label{rvf} The weights in our weighted versions of H\"{o}lder-Zygmund spaces will be taken from the class of Karamata regularly varying functions. Such functions have been very much studied and have numerous applications in diverse areas of mathematics. We refer to \cite{bingham, seneta} for their properties. Let us recall that a positive measurable function $L$ is called \emph{slowly varying} (at the origin) if it is asymptotically invariant under rescaling, that is,
\begin{equation}
\label{eqsv}
\lim_{\varepsilon\to0^{+}}\frac{L(a\varepsilon)}{L(\varepsilon)}=1, \ \ \ \text{for each }a>0.
\end{equation}
Familiar functions such as  $1$, $\left|\log \varepsilon\right|^{\beta},$ $\left|\log\left|\log \varepsilon\right|\right|^{\beta}$, ..., are slowly varying. Regularly varying functions are then those that can be written as $\varepsilon^{\alpha}L(\varepsilon)$, where $L$ is slowly varying and $\alpha\in\mathbb{R}$.
\subsection{Weak-asymptotics}
\label{wa}
We shall use some notions from the theory of asymptotics of generalized functions \cite{estrada-kanwal2,p-s-v,vindas-pilipovic,vladimirov-d-z1}. The weak-asymptotics of distributions, also known as quasi-asymptotics, measure pointwise scaling growth of distributions with respect to regularly varying functions in the weak sense. Let $E$ be a Banach space with norm $\left\| \ \right\|$ and let $L$ be slowly varying. For $\mathbf{f}\in \mathcal{S}'(\mathbb{R}^{n},E)$, we write
\begin{equation*}
\mathbf{f}\left(x_0+\varepsilon
t\right)=O\left(\varepsilon^{\alpha}L(\varepsilon)\right)\ \ \
\mbox{as}\  \varepsilon\to0^{+}  \ \ \mbox{in}\
\mathcal{S}'(\mathbb{R}^{n},E),
\end{equation*}
if the order growth relation holds after evaluation at each test function, i.e.,
\begin{equation}\label{eqasymp}
\left\|\left\langle \mathbf{f}\left(x_0+\varepsilon
t\right),\varphi(t)\right\rangle\right\|\leq C_{\varphi}\varepsilon^{\alpha}L(\varepsilon), \ \ \ 0<\varepsilon\leq 1,
\end{equation}
for each test function $\varphi\in\mathcal{S}(\mathbb{R}^{n}).$ Observe that weak-asymptotics are directly involved in Meyer's notion of the scaling weak pointwise exponent, so useful in the study of pointwise regularity and oscillating properties of functions \cite{meyer}.

One can also use these ideas to study exact pointwise scaling asymptotic properties of distributions (cf. \cite{estrada-kanwal2,p-s-v,vindas-pilipovic,vindas-pilipovic2009}). We restrict our attention here to the important notion of the value of a distribution at a point, introduced and studied by \L ojasiewicz in \cite{lojasiewicz,lojasiewicz2} (see also
\cite{estrada-vindasI,vindas-estrada1,walterw}). The vector-valued distribution $\mathbf{f}\in\mathcal{S}'(\mathbb{R}^{n},E)$ is said to have a value $\mathbf{v}\in E$ at the point $x_{0}\in\mathbb{R}^{n}$ if $\lim_{\varepsilon\to0^{+}}\mathbf{f}(x_{0}+\varepsilon t)=\mathbf{v}$, distributionally, i.e., for each $\varphi\in\mathcal{S}(\mathbb{R}^{n})$
\begin{equation*}
\lim_{\varepsilon\to0^{+}}\left\langle
\mathbf{f}\left(t\right),\frac{1}{\varepsilon^{n}}\varphi\left(\frac{t-x_{0}}{\varepsilon}\right)\right\rangle=
\mathbf{v}\int_{\mathbb{R}^{n}}\varphi(t)\mathrm{d}t \in E.
\end{equation*}
In such a case we simply write $\mathbf{f}(x_{0})=\mathbf{v}$, distributionally.

\subsection{Pointwise weak H\"{o}lder space}
\label{sphs} An important tool in Section \ref{L-Zygmund} will be the
concept of pointwise weak H\"{o}lder spaces of vector-valued distributions and their intimate connection with boundary asymptotics of the wavelet transform.  These pointwise spaces have been recently introduced and investigated in \cite{vindas-pilipovic}. They are extended versions of Meyer's pointwise weak spaces from \cite{meyer}. They are also close relatives of Bony's two-microlocal spaces \cite{jaffard-meyer,meyer}. Again, we denote by $E$ a Banach space, $L$ is a
slowly varying function at the origin.

For a given $x_{0}\in\mathbb{R}^{n}$ and
$\alpha\in\mathbb{R}$, the pointwise weak H\"{o}lder space \cite{vindas-pilipovic} $C_{w}^{\alpha,L}(x_{0},E)$ consists of those distributions $\mathbf{f}\in \mathcal{S}'(\mathbb{R}^{n},E) $ for which there is an $E$-valued polynomial $\mathbf{P}$ of degree less than $\alpha$ such that (cf. Subsection \ref{wa})
\begin{equation}
\label{pseq}
\mathbf{f}(x_{0}+\varepsilon t)=\mathbf{P}(\varepsilon t)+O(\varepsilon^{\alpha}L(\varepsilon)) \ \ \ \mbox{as } \varepsilon\to0^{+} \mbox{ in }  \mathcal{S}'(\mathbb{R}^{n},E).
\end{equation}
Observe that if $\alpha<0$, then the polynomial is irrelevant. In addition, when $\alpha\geq0$, this polynomial is unique; in fact (\ref{pseq}) readily implies that the \L ojasiewicz point values $\partial^{m}\mathbf{f}(x_{0})$ exist, distributionally, for $\left|m\right|<\alpha$ and that $\mathbf{P}$ is the ``Taylor polynomial''
$$
\mathbf{P}(t)=\sum_{\left|m\right| <\alpha} \frac{\partial^{m}\mathbf{f}(x_{0})}{m!}\: t^{m}.
$$

The pointwise weak H\"{o}lder space $C_{\ast,w}^{\alpha,L}(x_{0},E)$ of second type is defined as follows: $\mathbf{f}\in C_{\ast,w}^{\alpha,L}(x_{0},E)$ if (\ref{eqasymp}) is just assumed to hold for each $\varphi\in\mathcal{S}(\mathbb{R}^{n})$ satisfying the requirement $\mu_{m}(\varphi)=0$ for all multi-index $\left|m\right|\leq \alpha$. Naturally, the previous requirement is empty if $\alpha<0$, thus, in such a case, $C_{\ast,w}^{\alpha,L}(x_{0},E)=C_{w}^{\alpha,L}(x_{0},E)$. One can also show that if $\alpha\notin\mathbb{N}$, the equality between these two spaces remains true \cite{vindas-pilipovic}. On the other hand, when $\alpha\in\mathbb{N}$, we have the strict inclusion $C_{w}^{\alpha,L}(x_{0},E)\subsetneq C_{\ast,w}^{\alpha,L}(x_{0},E)$ (cf. comments below). The usefulness of  $C_{\ast,w}^{\alpha,L}(x_{0},E)$ lies in the fact that it admits a precise wavelet characterization. The following theorem is shown in \cite{vindas-pilipovic}, it forms part of more general  Tauberian-type results that will not be discussed here.

\begin{theorem}[\cite{vindas-pilipovic}]
\label{tauberian}  Let $\alpha\in\mathbb{R}$ and let $\psi\in\mathcal{S}(\mathbb{R}^{n})$ be a non-degenerate
wavelet with $\mu_{m}(\psi)=0$ for
$\left|m\right|\leq[\alpha]$. Then, $\mathbf{f}\in
C_{\ast,w}^{\alpha,L}(x_{0},E)$ if and only if there is
$k\in\mathbb{N}$ such that
\begin{equation*}
\limsup_{\varepsilon\rightarrow0^+}\sup_{\left|x\right|^2+y^2=1,\:y>0}
\frac{y^k}{\varepsilon^{\alpha}L(\varepsilon)}\left\|\mathcal{W}_{\psi}
\mathbf{f}\left(x_0+\varepsilon x,\varepsilon
y\right)\right\|<\infty.
\end{equation*}

\end{theorem}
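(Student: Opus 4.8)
The plan is to prove the two implications separately; the nontrivial (Tauberian) direction rests on a wavelet reconstruction identity. Put $B:=[\alpha]$ and note that, for both integer and non-integer $\alpha$, the test functions admissible in the definition of $C_{\ast,w}^{\alpha,L}(x_{0},E)$ are precisely those $\varphi\in\mathcal S(\mathbb R^{n})$ with $\mu_{m}(\varphi)=0$ for $|m|\le B$; they form a closed, hence Fréchet, subspace $\mathcal S_{B}(\mathbb R^{n})$ of $\mathcal S(\mathbb R^{n})$. I will use repeatedly the elementary rescaling identity obtained from \eqref{eq34} by the substitution $t=x_{0}+\varepsilon u$,
\[
\mathcal W_{\chi}\mathbf f(x_{0}+\varepsilon x,\varepsilon y)=\Big\langle \mathbf f(x_{0}+\varepsilon u),\tfrac{1}{y^{n}}\bar\chi\big(\tfrac{u-x}{y}\big)\Big\rangle ,
\]
valid for any wavelet $\chi$, any $(x,y)\in\mathbb H^{n+1}$ and $\varepsilon>0$. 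For necessity, assume $\mathbf f\in C_{\ast,w}^{\alpha,L}(x_{0},E)$. By definition the family $\{\varepsilon^{-\alpha}L(\varepsilon)^{-1}\langle \mathbf f(x_{0}+\varepsilon\,\cdot\,),\varphi\rangle:0<\varepsilon\le1\}$ is bounded in $E$ for each $\varphi\in\mathcal S_{B}(\mathbb R^{n})$; since $\mathcal S_{B}(\mathbb R^{n})$ is Fréchet, Banach--Steinhaus upgrades this to equicontinuity, so there are $p\in\mathbb N$ and $C>0$ with $\|\langle \mathbf f(x_{0}+\varepsilon u),\varphi(u)\rangle\|\le C\varepsilon^{\alpha}L(\varepsilon)\sum_{k,|m|\le p}\rho_{k,m}(\varphi)$ for all $\varphi\in\mathcal S_{B}(\mathbb R^{n})$, $0<\varepsilon\le1$. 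The vanishing moment hypothesis on $\psi$ guarantees that $\varphi_{x,y}:=y^{-n}\bar\psi((\,\cdot\,-x)/y)\in\mathcal S_{B}(\mathbb R^{n})$, and a change of variables in the seminorms gives $\rho_{k,m}(\varphi_{x,y})\le C_{k,m}\,y^{-n-|m|}$ when $|x|^{2}+y^{2}=1$, $0<y\le1$; inserting $\varphi=\varphi_{x,y}$ above and using the rescaling identity with $\chi=\psi$ yields the assertion with $k:=n+p$.

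Sufficiency is the Tauberian direction and the heart of the matter. One first reduces to $\mathbf f\in\mathcal E'(\mathbb R^{n},E)$ supported near $x_{0}$: for a cutoff $\kappa\in\mathcal D(\mathbb R^{n})$ equal to $1$ near $x_{0}$, the distribution $(1-\kappa)\mathbf f$ vanishes near $x_{0}$, so replacing $\mathbf f$ by $\kappa\mathbf f$ changes neither hypothesis nor conclusion modulo error terms that are $O(\varepsilon^{M})$ for every $M$. Since $\psi$ is non-degenerate (Definition \ref{defind}) there exist $\eta\in\mathcal S_{0}(\mathbb R^{n})$ and $c\neq0$ with $\mathcal M_{\bar\psi}\mathcal W_{\eta}=c\,\mathrm{Id}$ — the analogue of \eqref{eqinv} with $\psi$ acting last, whose existence, like that of ordinary reconstruction wavelets, is equivalent to the non-degeneracy of $\psi$ and follows from the same Fourier-multiplier computation. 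Because $\eta\in\mathcal S_{0}$ the multiplier symbol is an absolutely convergent integral, and one checks that the inversion $\varphi=\tfrac1c\mathcal M_{\bar\psi}\mathcal W_{\eta}\varphi$ persists for every $\varphi\in\mathcal S_{B}(\mathbb R^{n})$ — not merely for $\varphi\in\mathcal S_{0}$ — the needed convergence coming from the fact that $\mathcal W_{\eta}\varphi$ decays rapidly in $x$, vanishes to infinite order as $y\to0^{+}$, and is $O(y^{-n-B-1})$ as $y\to\infty$. Pairing this inversion with the (now compactly supported) $\mathbf f(x_{0}+\varepsilon\,\cdot\,)$, which legitimises exchanging the dual pairing with the synthesis integral, and using the rescaling identity with $\chi=\psi$, one obtains for all $\varphi\in\mathcal S_{B}(\mathbb R^{n})$
\[
\langle \mathbf f(x_{0}+\varepsilon u),\varphi(u)\rangle=\frac1c\int_{0}^{\infty}\!\!\int_{\mathbb R^{n}}\mathcal W_{\eta}\varphi(x,y)\,\mathcal W_{\psi}\mathbf f(x_{0}+\varepsilon x,\varepsilon y)\,\frac{\mathrm{d}x\,\mathrm{d}y}{y}.
\]

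It then remains to estimate this integral, which I would split at the scale $\varepsilon\sqrt{|x|^{2}+y^{2}}=\lambda$, with $\lambda>0$ chosen so that the hypothesis is quantitative for $0<\varepsilon\le\lambda$; we may also assume $k$ is so large that $\alpha+k\ge 0$. On $\{\varepsilon\sqrt{|x|^{2}+y^{2}}\le\lambda\}$ one rescales the hypothesis to the point $(x,y)/\sqrt{|x|^{2}+y^{2}}$ of the unit hemisphere and controls the slowly varying factor by Potter's inequalities \cite{bingham}, getting $\|\mathcal W_{\psi}\mathbf f(x_{0}+\varepsilon x,\varepsilon y)\|\le C\varepsilon^{\alpha}L(\varepsilon)(1+|x|+y)^{\alpha+k+\delta}y^{-k}$ for an arbitrarily small $\delta>0$; integrating this against $|\mathcal W_{\eta}\varphi(x,y)|$ over $\mathbb H^{n+1}$ produces a convergent integral — the decay properties of $\mathcal W_{\eta}\varphi$ recorded above are precisely what is used, and the borderline convergence at $y\to\infty$ forces the choice $0<\delta<1-\{\alpha\}$ — hence a contribution $\le C_{\varphi}\varepsilon^{\alpha}L(\varepsilon)$. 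On the complementary outer region the hypothesis says nothing, but there $\sqrt{|x|^{2}+y^{2}}>\lambda/\varepsilon$, and combining the crude a priori bounds available for $\mathcal W_{\psi}\mathbf f$ with $\mathbf f\in\mathcal E'(\mathbb R^{n},E)$ — polynomial growth in $y'+1/y'$ and rapid decay in $x'$ away from $\operatorname{supp}\mathbf f$ — with the decay of $\mathcal W_{\eta}\varphi$ makes the outer contribution $o(\varepsilon^{\alpha}L(\varepsilon))$. Adding the two contributions gives $\|\langle \mathbf f(x_{0}+\varepsilon u),\varphi(u)\rangle\|\le C_{\varphi}\varepsilon^{\alpha}L(\varepsilon)$ for every admissible $\varphi$, i.e. $\mathbf f\in C_{\ast,w}^{\alpha,L}(x_{0},E)$.

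The main obstacle is the sufficiency direction, and within it two points need care. First, one must establish and then exploit the reconstruction identity $\varphi=\tfrac1c\mathcal M_{\bar\psi}\mathcal W_{\eta}\varphi$ for test functions that have only finitely many vanishing moments (so $\varphi\notin\mathcal S_{0}$ and $\mathcal W_{\eta}\varphi\notin\mathcal S(\mathbb H^{n+1})$, and Theorem \ref{th1} does not apply verbatim); this is exactly why the reduction to compactly supported $\mathbf f$ and the sharp decay rates of $\mathcal W_{\eta}\varphi$ are indispensable in order to make every integral meaningful and to justify the interchange with the dual pairing. Second, one must propagate the estimates uniformly in $\varepsilon$ across the scale splitting, where the mere slow variation of $L$ forces the use of Karamata's theory \cite{bingham} in place of naive power estimates and the careful bookkeeping of the fractional part $\{\alpha\}$ in the convergence of the model integral at $y\to\infty$.
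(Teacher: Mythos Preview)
The paper does not actually prove this theorem: it is merely quoted from the reference \cite{vindas-pilipovic} with the remark that ``it forms part of more general Tauberian-type results that will not be discussed here.'' There is therefore no proof in the present paper to compare against.

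That said, your sketch is in the spirit of the Tauberian arguments in \cite{vindas-pilipovic,vindas-p-r}: the necessity direction via Banach--Steinhaus on the Fr\'echet space $\mathcal S_{B}(\mathbb R^{n})$ is standard and correct, and for sufficiency the strategy of localising $\mathbf f$, writing a Calder\'on-type reproducing formula with the roles of $\psi$ and $\eta$ interchanged, rescaling to bring the hypothesis into play, and handling the slowly varying weight by Potter's bounds is the right one. Two technical points in your outline are genuinely delicate and deserve to be spelled out in full rather than asserted. First, the reconstruction identity $\varphi=\tfrac1c\mathcal M_{\bar\psi}\mathcal W_{\eta}\varphi$ for $\varphi\in\mathcal S_{B}\setminus\mathcal S_{0}$ does hold, but the synthesis integral is only conditionally convergent at $y\to\infty$ (your own computation shows $\mathcal W_{\eta}\varphi$ is merely $O(y^{-n-B-1})$ there); you should make explicit why the exchange of the dual pairing with this integral is legitimate after the cutoff reduction, since Theorem \ref{th1} no longer applies. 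Second, the claim that the outer region $\{\varepsilon\sqrt{|x|^{2}+y^{2}}>\lambda\}$ contributes $o(\varepsilon^{\alpha}L(\varepsilon))$ requires a quantitative a priori bound on $\mathcal W_{\psi}\mathbf f$ for the compactly supported $\mathbf f$; stating precisely which polynomial bound in $(y+1/y,|x|)$ you use, and checking it against the decay of $\mathcal W_{\eta}\varphi$, would close the argument. Modulo these details the approach is sound.
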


It is worth mentioning that the elements of $C_{\ast,w}^{\alpha,L}(x_{0},E)$ for $\alpha=p\in\mathbb{N}$ can be characterized by pointwise weak-asymptotic expansions. We have \cite{vindas-pilipovic} that $\mathbf{f}\in C_{\ast,w}^{p,L}(x_{0},E)$ if and only if it admits the weak-expansion
$$ \mathbf{f}(x_{0}+\varepsilon t)=\sum_{\left|m\right|< p} \frac{\partial^{m}\mathbf{f}(x_{0})}{m!}\: (\varepsilon t)^{m}+\varepsilon^{p}\sum_{\left|m\right|=p}t^{m}\mathbf{c}_{m}
(\varepsilon)+O\left(\varepsilon^{p}L(\varepsilon)\right) \ \mbox{in }\mathcal{S}'(\mathbb{R}^{n},E),
$$
where $\partial^{m}\mathbf{f}(x_{0})$ are interpreted in the \L ojasiewicz sense and the $\mathbf{c}_{m}:(0,\infty)\to E$ are continuous functions. Comparison of this weak-expansion with (\ref{pseq}) explains the difference between the two pointwise spaces when $\alpha=p\in\mathbb{N}$.

\section{New class of H\"{o}lder-Zygmund spaces}
\label{L-Zygmund}
Throughout this section, we assume that $L$ is a slowly varying function such that $L$ and $1/L$ are locally bounded on $(0,1]$.

\subsection{$L$-H\"{o}lder spaces} We now introduce weighted H\"{o}lder spaces with respect to $L$. They were already defined and studied in \cite{vindas-pilipovic}. Let
$\alpha\in\mathbb{R}_{+}\setminus\mathbb{N}$. We say that a function $f$
belongs to the space $C^{\alpha,L}(\mathbb{R}^{n})$ if $f$ has continuous derivatives up to order less than $\alpha$ and
\begin{equation}
\label{eqnH}
\left\|f\right\|_{C^{\alpha,L}}:=\sum_{\left|j\right|\leq [\alpha]}
\sup_{t\in\mathbb{R}^{n}}\left|\partial^{j}f(t)\right|+
\sum_{\left|m\right|=[\alpha]}\sup_{0<\left|t-x\right|\leq 1}
\frac{\left|\partial^{m}f(t)-\partial^{m} f(x)\right|}{\left|t-x\right|^{\alpha
-[\alpha]}L(\left|t-x\right|)}<\infty.
\end{equation}
When $\alpha=p+1\in\mathbb{N}$, we replace the previous requirement by
$$
\left\|f\right\|_{C^{\alpha,L}}:=\sum_{\left|j\right|\leq p}
\sup_{t\in\mathbb{R}^{n}}\left|\partial^{j}f(t)\right|+
\sum_{\left|m\right|=p}\sup_{0<\left|t-x\right|\leq 1}
\frac{\left|\partial^{m}f(t)-\partial^{m} f(x)\right|}{\left|t-x\right|L(\left|t-x\right|)}<\infty.
$$

The space $C^{\alpha,L}(\mathbb{R}^{n})$ is clearly a Banach space with the above norm. The conditions imposed over $L$ ensure that $C^{\alpha,L}(\mathbb{R}^{n})$ depends only on the behavior of $L$
near 0; thus, it is invariant under dilations. When $L\equiv1$,
this space reduces to
$C^{\alpha,L}(\mathbb{R}^{n})=C^{\alpha}(\mathbb{R}^{n})$, the
usual global (inhomogeneous) H\"{o}lder space \cite{horm, jaffard-meyer, meyer}.
Consequently, as in \cite{vindas-pilipovic}, we call
$C^{\alpha,L}(\mathbb{R}^{n})$ the global H\"{o}lder space with
respect to $L$. Note that, because of the properties of $L$ \cite{bingham,seneta}, we have the following interesting inclusion relations:
$$
 C^{\beta}(\mathbb{R}^{n})\subsetneq
C^{\alpha,L}(\mathbb{R}^{n}) \subsetneq C^{\gamma}(\mathbb{R}^{n}), \ \ \ \text{
whenever } 0 < \gamma < \alpha < \beta. $$

\subsection{$L$-Zygmund  spaces}
We now proceed to define the weighted Zygmund space $C^{\alpha, L}_{\ast}
(\mathbb{R}^n) $. Let  $\alpha\in\mathbb R$ and fix a LP-pair $ (\phi, \psi) $ of order $\alpha$. A distribution  $f \in\mathcal{S}' (\mathbb{R}^n)$ is said to belong to the $L$-Zygmund space $C^{\alpha, L}_{\ast}
(\mathbb{R}^n) $ of exponent $\alpha$  if
\begin{equation} \label{eq7}
\left\|f\right\|_{C^{\alpha,L}_{\ast}}:=  \|f \ast \phi \|_{L^{\infty}} + \sup_{x \in
\mathbb{R}^n} \sup_{0 < y \leq 1} \frac{1}{y^{\alpha} L(y)}
|{\mathcal W}_{\psi} f (x, y)| < \infty.
\end{equation}

Observe that we clearly have $C^{\alpha, L}
(\mathbb{R}^n) \subseteq C^{\alpha, L}_{\ast}
(\mathbb{R}^n) $, for $\alpha>0$. We will show that if $\alpha\in\mathbb{R}_{+}\setminus{\mathbb{N}}$, we
actually have the equality $C^{\alpha, L}
(\mathbb{R}^n) = C^{\alpha, L}_{\ast}
(\mathbb{R}^n) $. When $\alpha$ is a positive integer, we have in turn $C^{\alpha, L}
(\mathbb{R}^n) \subsetneq C^{\alpha, L}_{\ast}
(\mathbb{R}^n) $. As in the case of $L$-H\"{o}lder spaces, our $L$-Zygmund spaces refine the scale of classical Zygmund spaces; more precisely, we have again the inclusions:
$$
C^{\beta}_{\ast}(\mathbb{R}^{n})\subsetneq
C^{\alpha,L}_{\ast}(\mathbb{R}^{n}) \subsetneq C^{\gamma}_{\ast}(\mathbb{R}^{n}), \ \ \ \text{whenever } \gamma < \alpha < \beta. $$
The definition of $C^{\alpha, L}_{\ast}
(\mathbb{R}^n) $ can give the impression that it might depend on the choice of the LP-pair; however, this is not the case, as shown by the ensuing result.

\begin{proposition} \label{propindZ}
The definition of $C^{\alpha, L}_{\ast}
(\mathbb{R}^n) $ does not depend on the choice of the LP-pair. Moreover, different LP-pairs lead to equivalent norms.
\end{proposition}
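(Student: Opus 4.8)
The plan is to exploit the reconstruction formula (\ref{eqLP}) of Proposition \ref{pLP}, which expresses an arbitrary $f\in\mathcal{S}'(\mathbb{R}^{n})$ through the data $f\ast\phi$ and $\{\mathcal{W}_{\psi}f(x,y):0<y\leq1\}$ attached to a given LP-pair $(\phi,\psi)$. Let $(\phi,\psi)$ and $(\phi_{1},\psi_{1})$ be two LP-pairs of order $\alpha$, and denote by $\|\cdot\|_{C^{\alpha,L}_{\ast},(\phi,\psi)}$ the norm (\ref{eq7}) computed with the pair $(\phi,\psi)$. Since $C^{\alpha,L}_{\ast}(\mathbb{R}^{n})$ is by definition the set on which that norm is finite, it suffices to prove the one-sided estimate $\|f\|_{C^{\alpha,L}_{\ast},(\phi_{1},\psi_{1})}\leq C\,\|f\|_{C^{\alpha,L}_{\ast},(\phi,\psi)}$ with $C$ independent of $f$; exchanging the two pairs then gives the reverse inequality, hence the equivalence of norms and the independence of the space. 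Fix, as in Proposition \ref{pLP}, a reconstruction wavelet $\eta\in\mathcal{S}_{0}(\mathbb{R}^{n})$ for $\psi$ with $\operatorname*{supp}\hat{\eta}\subset B(0,\sigma)$, and $\varphi\in\mathcal{D}(\mathbb{R}^{n})$ with $\varphi\equiv1$ on $B(0,\sigma)$ and $\operatorname*{supp}\varphi\subset B(0,r)$, where $\tau<\sigma<r$ and $\hat{\phi}$ does not vanish on $\overline{B(0,r)}$. Put $p=\max\{[\alpha],0\}$, so that $\mu_{m}(\psi_{1})=0$ for $|m|\leq p$ and $p+1-\alpha>0$.

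\emph{The wavelet part.} Fix $(x_{0},y_{0})$ with $0<y_{0}\leq1$ and apply (\ref{eqLP}) to $\theta(t)=y_{0}^{-n}\overline{\psi_{1}((t-x_{0})/y_{0})}$, for which $\langle f,\theta\rangle=\mathcal{W}_{\psi_{1}}f(x_{0},y_{0})$. Estimating the first term of (\ref{eqLP}) by $\|f\ast\phi\|_{L^{\infty}}\|\theta_{1}\|_{L^{1}}$ and the second by $|c_{\psi,\eta}|^{-1}\big(\sup_{x,\,0<y\leq1}|\mathcal{W}_{\psi}f(x,y)|/(y^{\alpha}L(y))\big)\int_{0}^{1}\!\int_{\mathbb{R}^{n}}y^{\alpha}L(y)\,|\mathcal{W}_{\bar{\eta}}\theta_{2}(x,y)|\,\frac{\mathrm{d}x\,\mathrm{d}y}{y}$, the bound for $\mathcal{W}_{\psi_{1}}f$ reduces to proving, uniformly in $x_{0}$ and $y_{0}\in(0,1]$,
\begin{equation*}
\|\theta_{1}\|_{L^{1}}\leq C\,y_{0}^{\alpha}L(y_{0})\qquad\text{and}\qquad\int_{0}^{1}\!\int_{\mathbb{R}^{n}}y^{\alpha}L(y)\,|\mathcal{W}_{\bar{\eta}}\theta_{2}(x,y)|\,\frac{\mathrm{d}x\,\mathrm{d}y}{y}\leq C\,y_{0}^{\alpha}L(y_{0}).
\end{equation*}
For the first, since $\partial^{m}\hat{\psi}_{1}(0)=0$ for $|m|\leq p$, Taylor's theorem writes $\hat{\psi}_{1}(\zeta)=\sum_{|m|=p+1}\zeta^{m}g_{m}(\zeta)$ with each $g_{m}$ smooth and having all derivatives bounded; consequently $\hat{\theta}_{1}(\xi)=y_{0}^{p+1}e^{-ix_{0}\cdot\xi}h_{y_{0}}(\xi)$, where $(h_{y_{0}})_{0<y_{0}\leq1}$ is a bounded family in $\mathcal{D}(B(0,r))$ (here one uses $\operatorname*{supp}\varphi\subset B(0,r)$, the non-vanishing of $\hat{\phi}$ there, and $y_{0}\leq1$), and the modulation only translates $\theta_{1}$; hence $\|\theta_{1}\|_{L^{1}}\leq C\,y_{0}^{p+1}\leq C'\,y_{0}^{\alpha}L(y_{0})$, the last step because $y^{p+1-\alpha}/L(y)$ is bounded on $(0,1]$ (it tends to $0$ as $y\to0^{+}$ since $p+1-\alpha>0$ and $1/L$ is slowly varying, and $L,1/L$ are locally bounded on $(0,1]$).

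\emph{The key estimate.} The second bound is the heart of the matter. Write $\theta_{2}=\theta-\varphi^{\vee}\ast\theta$ ($\varphi^{\vee}$ being the inverse Fourier transform of $\varphi$, so that $\hat{\theta}_{2}=\hat{\theta}(1-\varphi)$); it then suffices to establish the analogous bound for $\mathcal{W}_{\bar{\eta}}\theta$ and for $\mathcal{W}_{\bar{\eta}}(\varphi^{\vee}\ast\theta)$. A change of variables gives $\mathcal{W}_{\bar{\eta}}\theta(x,y)=y_{0}^{-n}G\big((x-x_{0})/y_{0},\,y/y_{0}\big)$, with $G(b,\lambda)=\lambda^{-n}\int_{\mathbb{R}^{n}}\overline{\psi_{1}(s)}\,\eta((s-b)/\lambda)\,\mathrm{d}s$ a fixed smooth function on $\mathbb{H}^{n+1}$. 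Since all moments of $\eta$ vanish, Taylor-expanding $\overline{\psi_{1}(b+\lambda w)}$ (after $s=b+\lambda w$) yields $|G(b,\lambda)|\leq C_{N,M}\,\lambda^{N}(1+|b|)^{-M}$ for all $N,M$ when $\lambda\leq1$; since $\mu_{m}(\psi_{1})=0$ for $|m|\leq p$, Taylor-expanding $\eta((s-b)/\lambda)$ in $s$ yields $|G(b,\lambda)|\leq C_{M}\,\lambda^{-n-p-1}(1+|b|/\lambda)^{-M}$ when $\lambda\geq1$. Integrating in $x$ gives $\int_{\mathbb{R}^{n}}|\mathcal{W}_{\bar{\eta}}\theta(x,y)|\,\mathrm{d}x\leq C\,\omega(y/y_{0})$ with $\omega(\lambda)=\lambda^{N}$ for $\lambda\leq1$ and $\omega(\lambda)=\lambda^{-p-1}$ for $\lambda\geq1$; hence the corresponding iterated integral is $\leq\int_{0}^{1}y^{\alpha-1}L(y)\,\omega(y/y_{0})\,\mathrm{d}y$, and splitting this at $y=y_{0}$ and applying Karamata's theorem \cite{bingham} to $\int_{0}^{y_{0}}y^{\alpha+N-1}L(y)\,\mathrm{d}y$ (choosing $N$ with $\alpha+N>0$) and to $\int_{y_{0}}^{1}y^{\alpha-p-2}L(y)\,\mathrm{d}y$ (here $\alpha-p-1<0$) bounds each piece by $C\,y_{0}^{\alpha}L(y_{0})$. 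For the correction term, the cancellation coming from $\mu_{m}(\psi_{1})=0$ ($|m|\leq p$) gives $|(\partial^{\kappa}\varphi^{\vee}\ast\theta)(t)|\leq C_{\kappa,M}\,y_{0}^{p+1}(1+|t-x_{0}|)^{-M}$ for all $\kappa,M$, and then the vanishing of all moments of $\eta$ (expanding $(\varphi^{\vee}\ast\theta)(x+yw)$ in $w$) upgrades this to $|\mathcal{W}_{\bar{\eta}}(\varphi^{\vee}\ast\theta)(x,y)|\leq C_{N,M}\,y_{0}^{p+1}y^{N}(1+|x-x_{0}|)^{-M}$ for $0<y\leq1$; thus its contribution is $\leq C\,y_{0}^{p+1}\int_{0}^{1}y^{\alpha+N-1}L(y)\,\mathrm{d}y\leq C'\,y_{0}^{p+1}\leq C''\,y_{0}^{\alpha}L(y_{0})$ for $N$ large. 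This proves the second bound, whence $\sup_{x,\,0<y\leq1}|\mathcal{W}_{\psi_{1}}f(x,y)|/(y^{\alpha}L(y))\leq C\,\|f\|_{C^{\alpha,L}_{\ast},(\phi,\psi)}$.

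\emph{The low-frequency part and conclusion.} To control $\|f\ast\phi_{1}\|_{L^{\infty}}$, apply (\ref{eqLP}) to the test function $\theta(t)=\phi_{1}(x_{0}-t)$, so that $\langle f,\theta\rangle=(f\ast\phi_{1})(x_{0})$; now $\theta_{1}\in\mathcal{S}(\mathbb{R}^{n})\subset L^{1}(\mathbb{R}^{n})$ and $\theta_{2}\in\mathcal{S}_{0}(\mathbb{R}^{n})$ (since $1-\varphi$ vanishes near the origin), so $\mathcal{W}_{\bar{\eta}}\theta_{2}\in\mathcal{S}(\mathbb{H}^{n+1})$ by Theorem \ref{th1} and $\int_{0}^{1}\!\int y^{\alpha-1}L(y)\,|\mathcal{W}_{\bar{\eta}}\theta_{2}(x,y)|\,\mathrm{d}x\,\mathrm{d}y$ is a finite constant; as translating in $x_{0}$ affects none of these quantities, $|(f\ast\phi_{1})(x_{0})|\leq C\,\|f\|_{C^{\alpha,L}_{\ast},(\phi,\psi)}$ uniformly in $x_{0}$. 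Combining this with the wavelet part yields $\|f\|_{C^{\alpha,L}_{\ast},(\phi_{1},\psi_{1})}\leq C\,\|f\|_{C^{\alpha,L}_{\ast},(\phi,\psi)}$, and interchanging the two pairs completes the argument. The step I expect to be the main obstacle is the key estimate: it requires a careful two-scale analysis of $\mathcal{W}_{\bar{\eta}}\theta_{2}$ — the wavelet transform, against the Lizorkin wavelet $\eta$, of the dilated and translated wavelet $\theta$ with its low-frequency part removed — exploiting the vanishing moments of $\psi_{1}$ for the regime in which the scales $y$ and $y_{0}$ separate, the vanishing of all moments of $\eta$ for the regime $y\to0^{+}$, and Karamata's theorem to absorb the slowly varying weight $L$.
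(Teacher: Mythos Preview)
Your proof is correct and, like the paper's, is built on the reconstruction formula (\ref{eqLP}) of Proposition~\ref{pLP}; the technical execution, however, differs. The paper packages the argument into a more general intermediate lemma (Lemma~\ref{zl1}), proving that $\|f\ast\theta\|_{L^\infty}\leq C\|f\|_{C^{\alpha,L}_\ast}$ for every $\theta\in\mathcal{S}(\mathbb{R}^n)$ and that $\sup_{x,\,0<y\leq1}(y^{\alpha}L(y))^{-1}|\mathcal{W}_\theta f(x,y)|\leq C\|f\|_{C^{\alpha,L}_\ast}$ uniformly over any bounded family $\mathfrak{B}\subset\mathcal{S}(\mathbb{R}^n)$ with the required vanishing moments. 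Its key device is a change of variables that replaces the fixed cutoff $\varphi$ by the rescaled one $\varphi(\cdot/y)$, producing a family $\theta_y$ with $\hat\theta_y(\xi)=\hat\theta(\xi)(1-\varphi(\xi/y))$; the estimate then follows from Potter's inequality $L(ay)/L(y)\leq C(a+a^{-1})$ together with the continuity of $\mathcal{W}_{\bar\eta}$ on $\mathcal{S}_0(\mathbb{R}^n)$ (Theorem~\ref{th1}). You instead perform a direct two-scale analysis of $G(b,\lambda)$ at the ratio $\lambda=y/y_0$ and appeal to Karamata's theorem rather than Potter's estimate. What the paper's route buys is the stronger uniform statement over bounded families, which is reused later in Corollary~\ref{cZ1}(iii) (with $\mathfrak{B}=\{(y^2-\Delta)^{\beta/2}\psi\}_{0<y\leq1}$); your argument treats a single $\psi_1$, though it extends to bounded families with only notational changes.
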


In view of Proposition \ref{propindZ}, we may employ a LP-pair coming from a continuous Littlewood-Paley decomposition of the unity (cf. Example \ref{ex1}) in the definition of $C^{\alpha,L}_{\ast}(\mathbb{R}^{n})$. Therefore, when $ L \equiv 1,$ we recover the
classical Zygmund space $C^{\alpha,L}_{\ast}(\mathbb{R}^{n})=C^{\alpha}_{\ast}(\mathbb{R}^{n}) $ \cite{horm}. Proposition \ref{propindZ} follows at once from the following lemma.

\begin{lemma}\label{zl1}
Let $f\in  C^{\alpha, L}_{\ast} (\mathbb{R}^n),$
then for every $\theta\in  {\mathcal S}(\mathbb{R}^n)$ there holds
\begin{equation} \label{fp}
||f*\theta||_{L^{\infty}}\leq C \left\|f\right\|_{C^{\alpha,L}_{\ast}},
\end{equation}
where $\left\|f\right\|_{C^{\alpha,L}_{\ast}}$ is given by $(\ref{eq7})$. Furthermore,  if $\mathfrak{B}\subset \mathcal{S}(\mathbb{R}^{n})$ is a bounded set such that $\mu_m(\theta)=0$ for all $\theta\in\mathfrak{B}$ and all multi-index $m\leq [\alpha],$
then
\begin{equation} \label{sp}
\sup_{x\in\mathbb{R}^n}\sup_{0<y\leq 1} \frac{1}{y^\alpha L(y)}|\mathcal
W_\theta f(x,y)|\leq C \left\|f\right\|_{C^{\alpha,L}_{\ast}}, \ \ \ \mbox{for all }\theta\in\mathfrak{B}.
\end{equation}
\end{lemma}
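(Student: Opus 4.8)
The plan is to deduce both estimates from the reconstruction formula of Proposition \ref{pLP}, applied to a well chosen test function, and then to bound the two resulting terms by $\|f\|:=\|f\|_{C^{\alpha,L}_\ast}$, using $\|f\ast\phi\|_{L^\infty}\le\|f\|$ and $|\mathcal W_\psi f(x,y)|\le\|f\|\,y^{\alpha}L(y)$ for $0<y\le1$. I keep the notation of Proposition \ref{pLP}: $\eta$, $\varphi$, $\sigma$, $r$, $c_{\psi,\eta}$ are fixed once the LP-pair $(\phi,\psi)$ (the one from the definition of $C^{\alpha,L}_\ast$) is fixed, and for $h\in\mathcal S(\mathbb R^n)$ I put $\hat h_1(\xi)=\hat h(\xi)\varphi(\xi)/\hat\phi(-\xi)$, $\hat h_2(\xi)=\hat h(\xi)(1-\varphi(\xi))$, so that $\operatorname*{supp}\hat h_1\subset B(0,r)$ and $h_2\in\mathcal S_0(\mathbb R^n)$ (because $1-\varphi$ vanishes near $0$). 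Two elementary properties of the standing slowly varying $L$ will be used: (a) $\int_0^1 y^{\beta-1}L(y)\,\mathrm{d}y<\infty$ for every $\beta>0$; (b) the Potter bound $L(y_0Y)\le C_\delta L(y_0)\max(Y^{\delta},Y^{-\delta})$ for $0<y_0,y_0Y\le1$ and any $\delta>0$. Finally, when $\alpha<0$ the moment hypothesis on $\mathfrak B$ is vacuous, but then $y^{\alpha}L(y)$ is bounded below on $(0,1]$ and it suffices to prove $\|f\ast\theta\|_{L^\infty},|\mathcal W_\theta f(x_0,y_0)|\le C\|f\|$, which is easier; below I describe the main case $\alpha\ge0$.

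\emph{Estimate (\ref{fp}).} Fix $x_0$ and apply Proposition \ref{pLP} to the test function $\theta(x_0-\cdot)$, the translate by $x_0$ of $\check\theta:=\theta(-\cdot)$. Since $h\mapsto(h_1,h_2)$ commutes with translations, Proposition \ref{pLP} expresses $(f\ast\theta)(x_0)=\langle f,\theta(x_0-\cdot)\rangle$ as $\int_{\mathbb R^n}(f\ast\phi)(t)(\check\theta)_1(t-x_0)\,\mathrm{d}t$ plus $c_{\psi,\eta}^{-1}\int_0^1\!\int_{\mathbb R^n}\mathcal W_\psi f(x,y)\,\mathcal W_{\bar\eta}(\check\theta)_2(x-x_0,y)\,\tfrac{\mathrm{d}x\,\mathrm{d}y}{y}$. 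The first summand is $\le\|f\ast\phi\|_{L^\infty}\|(\check\theta)_1\|_{L^1}\le C\|f\|$, independently of $x_0$. In the second, $|\mathcal W_\psi f(x,y)|\le\|f\|y^{\alpha}L(y)$ and the $x$-integral of $|\mathcal W_{\bar\eta}(\check\theta)_2(x-x_0,y)|$ does not depend on $x_0$; since $(\check\theta)_2,\bar\eta\in\mathcal S_0(\mathbb R^n)$, Theorem \ref{th1} gives $\mathcal W_{\bar\eta}(\check\theta)_2\in\mathcal S(\mathbb H^{n+1})$, so bounding its seminorm with weight $(1+|x|^2)^{(n+1)/2}(y^{l}+y^{-l})$ yields $\int_{\mathbb R^n}|\mathcal W_{\bar\eta}(\check\theta)_2(x,y)|\,\mathrm{d}x\le C_l y^{l}$ on $(0,1]$ for any $l$; picking $l$ with $\alpha+l>0$ and using (a) bounds the second summand by $C\|f\|$ as well. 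Hence (\ref{fp}).

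\emph{Estimate (\ref{sp}).} Fix $x_0$ and $0<y_0\le1$, set $\vartheta_{x_0,y_0}(t):=y_0^{-n}\bar\theta((t-x_0)/y_0)$, so $\mathcal W_\theta f(x_0,y_0)=\langle f,\vartheta_{x_0,y_0}\rangle$, and apply Proposition \ref{pLP} to $\vartheta_{x_0,y_0}$. The extra ingredient is that $\mu_m(\theta)=0$ for $|m|\le[\alpha]$ gives $\partial^m\hat\theta(0)=0$ for $|m|\le[\alpha]$, so Taylor's formula yields $|\hat\theta(\zeta)|\le C|\zeta|^{[\alpha]+1}$ on bounded sets with $C$ uniform over $\mathfrak B$; under rescaling this produces factors $y_0^{[\alpha]+1}$, which are admissible because $y_0^{[\alpha]+1-\alpha}/L(y_0)$ is bounded on $(0,1]$. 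The low-frequency term $\int_{\mathbb R^n}(f\ast\phi)(t)(\vartheta_{x_0,y_0})_1(t)\,\mathrm{d}t$ is $\le\|f\ast\phi\|_{L^\infty}\|(\vartheta_{x_0,y_0})_1\|_{L^1}$, and since $\widehat{(\vartheta_{x_0,y_0})_1}(\xi)=e^{-\mathrm{i}x_0\cdot\xi}\overline{\hat\theta(-y_0\xi)}\varphi(\xi)/\hat\phi(-\xi)$ is supported in $B(0,r)$ with $|\hat\theta(-y_0\xi)|\le C(y_0|\xi|)^{[\alpha]+1}$ there, differentiation under the inverse Fourier transform gives $\|(\vartheta_{x_0,y_0})_1\|_{L^1}\le Cy_0^{[\alpha]+1}$ uniformly in $x_0$ and over $\mathfrak B$, so this term is $\le C\|f\|y_0^{\alpha}L(y_0)$. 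For the wavelet term I split $\mathcal W_{\bar\eta}(\vartheta_{x_0,y_0})_2=\mathcal W_{\bar\eta}\vartheta_{x_0,y_0}-\mathcal W_{\bar\eta}(\vartheta_{x_0,y_0})_1$. The correction $(\vartheta_{x_0,y_0})_1$, being band-limited with Schwartz seminorms $\le Cy_0^{[\alpha]+1}$ (after translating away $x_0$), satisfies $\int_{\mathbb R^n}|\mathcal W_{\bar\eta}(\vartheta_{x_0,y_0})_1(x,y)|\,\mathrm{d}x\le C_M y_0^{[\alpha]+1}y^{M}$ for $0<y\le1$ and any $M$ (Taylor expansion in $y$, using that all moments of $\bar\eta$ vanish); pairing with $|\mathcal W_\psi f|\le\|f\|y^{\alpha}L(y)$ and using (a) bounds its contribution by $C\|f\|y_0^{[\alpha]+1}\le C\|f\|y_0^{\alpha}L(y_0)$. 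For the remaining piece I use the covariance identity $\mathcal W_{\bar\eta}\vartheta_{x_0,y_0}(x,y)=y_0^{-n}(\mathcal W_{\bar\eta}\bar\theta)((x-x_0)/y_0,y/y_0)$; the substitution $x=x_0+y_0X$, $y=y_0Y$ turns the corresponding integral into $\int_0^{1/y_0}\!\int_{\mathbb R^n}\mathcal W_\psi f(x_0+y_0X,y_0Y)(\mathcal W_{\bar\eta}\bar\theta)(X,Y)\,\tfrac{\mathrm{d}X\,\mathrm{d}Y}{Y}$, of modulus $\le\|f\|y_0^{\alpha}\int_0^{1/y_0}\!\int_{\mathbb R^n}Y^{\alpha-1}L(y_0Y)|\mathcal W_{\bar\eta}\bar\theta(X,Y)|\,\mathrm{d}X\,\mathrm{d}Y$. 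Here $\mathcal W_{\bar\eta}\bar\theta$ is smooth on $\mathbb H^{n+1}$ and, by routine Taylor expansions uniform over $\mathfrak B$, satisfies $|\mathcal W_{\bar\eta}\bar\theta(X,Y)|\le C_{M,N}Y^{M}(1+|X|)^{-N}$ for $0<Y\le1$ (all moments of $\bar\eta$ vanish) and $|\mathcal W_{\bar\eta}\bar\theta(X,Y)|\le C_N Y^{-n-[\alpha]-1}(1+|X|/Y)^{-N}$ for $Y\ge1$ ($[\alpha]+1$ moments of $\bar\theta$ vanish), hence $\int_{\mathbb R^n}|\mathcal W_{\bar\eta}\bar\theta(X,Y)|\,\mathrm{d}X\le C\min(Y^{M},Y^{-[\alpha]-1})$. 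Splitting $\int_0^{1/y_0}=\int_0^1+\int_1^{1/y_0}$, applying (b), taking $M$ large and $\delta$ small, and using $0\le\alpha-[\alpha]<1$ for convergence at $Y=\infty$, this integral is $\le CL(y_0)$. Collecting the three contributions gives $|\mathcal W_\theta f(x_0,y_0)|\le C\|f\|y_0^{\alpha}L(y_0)$, i.e. (\ref{sp}).

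The main obstacle is the wavelet term of (\ref{sp}): because $\vartheta_{x_0,y_0}$ concentrates at scale $y_0$ as $y_0\to0^+$, the function $\mathcal W_{\bar\eta}(\vartheta_{x_0,y_0})_2$, although it lies in $\mathcal S(\mathbb H^{n+1})$, has seminorms blowing up with $y_0$, so it cannot be estimated directly. The covariance identity removes this by trading the divergent quantity for a scale integral against the \emph{fixed} profile $\mathcal W_{\bar\eta}\bar\theta$, at the price of an integration range $(0,1/y_0)$ expanding to $(0,\infty)$. One then has to match the two-sided behaviour of $\mathcal W_{\bar\eta}\bar\theta$ — which is genuinely \emph{not} in $\mathcal S(\mathbb H^{n+1})$, since $\bar\theta\notin\mathcal S_0(\mathbb R^n)$ and so it decays only polynomially as $Y\to\infty$ — against the slow variation of $L$ through Potter's bound; the exponent $\alpha-[\alpha]\in[0,1)$ is precisely what makes the $Y\to\infty$ tail converge. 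A secondary point is that $(\vartheta_{x_0,y_0})_2$ is not itself a dilate (the cutoff $1-\varphi$ is not scale invariant), which forces the preliminary splitting $(\vartheta_{x_0,y_0})_2=\vartheta_{x_0,y_0}-(\vartheta_{x_0,y_0})_1$; the correction is harmless since it is band-limited and inherits the vanishing moments of $\theta$.
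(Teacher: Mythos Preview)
Your proof is correct and rests on the same skeleton as the paper's: both apply Proposition~\ref{pLP} to the dilated test function, change variables so that $\mathcal{W}_\psi f$ is evaluated at $(x_0+y_0X,y_0Y)$, bound it by $\|f\|\,(y_0Y)^{\alpha}L(y_0Y)$, and use Potter's inequality to separate $L(y_0Y)$ from $L(y_0)$. The difference lies in how the high-frequency term $F_2$ is organized. The paper rescales the cutoff, defining $\hat{\theta}_y(\xi)=\hat{\theta}(\xi)(1-\varphi(\xi/y))$, and argues that the family $\{\theta_y:\theta\in\mathfrak{B},\,y\in(0,1]\}$ is bounded in $\mathcal{S}_0(\mathbb{R}^n)$, so that Theorem~\ref{th1} immediately gives uniform control of $\mathcal{W}_{\bar\eta}\theta_y$ in $\mathcal{S}(\mathbb{H}^{n+1})$ and hence of the integral. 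You instead keep the cutoff fixed, split $(\vartheta_{x_0,y_0})_2=\vartheta_{x_0,y_0}-(\vartheta_{x_0,y_0})_1$, and use the covariance of $\mathcal{W}_{\bar\eta}$ to reduce to the fixed profile $\mathcal{W}_{\bar\eta}\bar\theta$; since $\bar\theta\notin\mathcal{S}_0$, Theorem~\ref{th1} does not apply and you estimate this profile by hand, obtaining $\int|\mathcal{W}_{\bar\eta}\bar\theta(X,Y)|\,\mathrm{d}X\le C\min(Y^{M},Y^{-[\alpha]-1})$. The paper's route is shorter, while yours is more explicit about exactly where the moment hypothesis on $\theta$ is consumed (namely, in the $Y\to\infty$ tail, where the decay exponent $-[\alpha]-1$ must beat $\alpha-1$) and avoids relying on the delicate claim that $\{\theta_y\}$ stays bounded in all Schwartz seminorms. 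One small point: your dismissal of the case $\alpha<0$ (``it suffices to prove $|\mathcal{W}_\theta f(x_0,y_0)|\le C\|f\|$, which is easier'') still needs the same mechanism---the family $y_0^{-n}\bar\theta(\cdot/y_0)$ is not bounded in $\mathcal{S}(\mathbb{R}^n)$ as $y_0\to0$, so one cannot simply cite \eqref{fp}; but your argument for $\alpha\ge0$ specializes without change (the $Y\to\infty$ integral then only needs $\alpha-1+\delta<-1$, which holds for small $\delta$).
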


\begin{proof}
The estimate (\ref{fp}) easily follows from the representation (\ref{eqLP}) of $f$ from Proposition \ref{pLP}. Let us show (\ref{sp}). We retain the notation from the statement of Proposition \ref{pLP}. In view of (\ref{eqLP}), a quick calculation yields
$$
\mathcal{
W}_\theta f(x,y)= F_{1}(x,y)+F_{2}(x,y)+F_{3}(x,y),
$$
where
$$
F_{1}(x,y)= \frac{1}{(2\pi)^{n}}\int_{\mathbb{R}^{n}}(f\ast\hat{\varphi})(x+yt)\bar{\theta}(t)\mathrm{d}t,
$$
$$
F_{2}(x,y)=\frac{1}{c_{\psi,\eta}}\int_{0}^{1/y}\int_{\mathbb{R}^{n}}\mathcal{W}_{\psi}f(yb+x,ya)\mathcal{W}_{\bar{\eta}}\theta_{2,y}(b,a)\frac{\mathrm{d}b\mathrm{d}a}{a},
$$
and
$$
F_{3}(x,y)=\frac{1}{c_{\psi,\eta}}\int_{0}^{1}\int_{\mathbb{R}^{n}}\mathcal{W}_{\psi}f(yb+x,ya)\mathcal{W}_{\bar{\eta}}\theta_{3,y}(b,a)\frac{\mathrm{d}b\mathrm{d}a}{a},
$$
with $\hat{\theta}_{2,y}(\xi)=\hat{\bar{\theta}}(\xi)(1-\varphi(\xi))(1-\varphi(\xi/y))$ and $\hat{\theta}_{3,y}(\xi)=\hat{\bar{\theta}}(\xi)\varphi(\xi)(1-\varphi(\xi/y))$.
To estimate $F_{1}$, we first observe that if $\alpha<0$, then
$$\sup_{0<y\leq 1}\sup_{x\in\mathbb{R}^{n}}\frac{\left|F_{1}(x,y)\right|}{y^\alpha L(y)}\leq C\left|f\right|_{C^{\alpha,L}_{\ast}}\left\|\theta\right\|_{L^{1}}\sup_{0<y\leq 1}\frac{1}{y^\alpha L(y)}\leq C'\left|f\right|_{C^{\alpha,L}_{\ast}},$$
because slowly varying functions satisfy the estimates $y^{\varepsilon}<CL(y)$ for any exponent $\varepsilon>0$ \cite{bingham,seneta}. When $\alpha\geq 0$, we have that $(f\ast\hat{\varphi})$ is a $C^{\infty}$-function with bounded derivatives of any order. Then, by the Taylor formula, (\ref{fp}) (with $\theta=\partial^{m}\hat{\varphi}$), and the assumption $\mu_{m}(\theta)=0$ for $\left|m\right|\leq [\alpha]$, we obtain
\begin{align*}
\sup_{0<y\leq 1}\sup_{x\in\mathbb{R}^{n}}\frac{\left|F_{1}(x,y)\right|}{y^{\alpha}L(y)}&\leq  \sup_{0<y\leq 1}C\left\|f\right\|_{C^{\alpha,L}_{\ast}} \frac{y^{[\alpha]+1-\alpha}}{L(y)}\int_{\mathbb{R}^{n}}\left|t\right|^{[\alpha]+1}\left|\theta(t)\right|\mathrm{d}t
\\
&
\leq C' \left\|f\right\|_{C^{\alpha,L}_{\ast}}.
\end{align*}
We now bound $F_{2}$ and $F_{3}$. If $\varepsilon>0$, Potter's estimate \cite[p. 25]{bingham} gives the existence of a constant $C=C_{\varepsilon}$ such that
$$
\frac{L(ay)}{L(y)}< C \left(a^{\varepsilon}+\frac{1}{a^{\varepsilon}}\right), \ \ \ \mbox{for all } 0<y<1 \mbox{ and }a<1/y.
$$
Thus,
\begin{align*}
\frac{\left|F_{2}(x,y)\right|}{y^{\alpha}L(y)}&\leq \frac{\left\|f\right\|_{C^{\alpha,L}_{\ast}}}{c_{\psi,\eta}}\int_{0}^{1/y}\int_{\mathbb{R}^{n}}a^{\alpha-1}\frac{L(ay)}{L(y)}\left|\mathcal{W}_{\bar{\eta}}\theta_{2,y}(b,a)\right|\mathrm{d}b\mathrm{d}a
\\
&
\leq C'\left\|f\right\|_{C^{\alpha,L}_{\ast}}\int_{0}^{\infty}\int_{\mathbb{R}^{n}}(a^{\alpha}+a^{\alpha-2})\left|\mathcal{W}_{\bar{\eta}}\theta_{2,y}(b,a)\right|\mathrm{d}b\mathrm{d}a.
\end{align*}
Notice that $\left\{\theta_{2,y}\in\mathcal{S}_{0}(\mathbb{R}^{n}): \:\theta\in\mathfrak{B},\: y\in(0,1] \right\}$ is a bounded set in $\mathcal{S}_{0}(\mathbb{R}^{n})$ because the derivatives of $\varphi$ are supported in $\left\{\xi: \: \sigma\leq\left|\xi\right|\leq r\right\}$. Thus, due to the continuity of $\mathcal{W}_{\bar{\eta}}$ (cf. Theorem \ref{th1}), $\left\{\mathcal{W}_{\bar{\eta}}\theta_{y}:\: \theta\in\mathfrak{B},\: y\in(0,1]\right\}$ is a bounded set in $\mathcal{S}(\mathbb{H}^{n+1})$. This implies that the integrals involved in the very last estimate are uniformly bounded for $\theta\in\mathfrak{B}$ and $y\in(0,1]$. 
Consequently, we obtain that

$$\sup_{0<y\leq 1}\sup_{x\in\mathbb{R}^{n}}\frac{\left|F_{2}(x,y)\right|}{y^{\alpha}L(y)}\leq C \left\|f\right\|_{C^{\alpha,L}_{\ast}}.$$
Next, for $F_{3}$, we have
$$ 
\frac{\left|F_{3}(x,y)\right|}{y^{\alpha}L(y)}\leq
C'\left\|f\right\|_{C^{\alpha,L}_{\ast}}\int_{0}^{1}\int_{\mathbb{R}^{n}}(a^{\alpha-3/2}+a^{\alpha-1/2})\left|\mathcal{W}_{\bar{\eta}}\theta_{3,y}(b,a)\right|\mathrm{d}b\mathrm{d}a.
$$
As in the proof of Theorem \ref{th1}, the above integrand can be uniformly estimated by
$C(1+|b|^{2})^{-n}.$ This completes the proof (\ref{sp}).
\end{proof}

We obtain the following useful properties.
\begin{corollary} \label{cZ1} The following properties hold:
\begin{itemize}
\item [(i)]$\partial^{m}: C^{\alpha, L}_{\ast} (\mathbb{R}^n)\rightarrow C^{\alpha -\left|m\right|, L}_{\ast}
(\mathbb{R}^n)$ is continuous, for any $m\in\mathbb{N}^{n}$.
\item [(ii)] If $f\ast\phi\in L^{\infty}(\mathbb{R}^{n})$ and $ \partial_{j}f\in C^{\alpha - 1, L}_{\ast}
(\mathbb{R}^n) $ for $j=1,\dots,n,$ then $ f \in
C^{\alpha, L}_{\ast} (\mathbb{R}^n).$
\item [(iii)] The mapping $(1-\Delta)^{\beta/2}$ is an isomorphism of the space $C^{\alpha, L}_{\ast} (\mathbb{R}^n)$ onto $C^{\alpha -\beta, L}_{\ast}
(\mathbb{R}^n)$, for arbitrary $\alpha,\beta\in\mathbb{R}$.
\end{itemize}
\end{corollary}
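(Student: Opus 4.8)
\emph{Part (i).} The plan is to reduce to the case $\left|m\right|=1$ by iteration and then to exploit the behavior of the wavelet transform under differentiation. Fix a LP-pair $(\phi,\psi)$ of order $\alpha$; by Proposition \ref{propindZ} any convenient pair may be used to compute $\left\|\cdot\right\|_{C^{\alpha-1,L}_{\ast}}$, so I would choose a pair adapted to $\partial_{j}$. Writing $\mathcal{W}_{\psi}(\partial_{j}f)(x,y)$ in terms of $\mathcal{W}_{\psi_{(j)}}f(x,y)$ where $\psi_{(j)}$ involves $\partial_{j}\psi$ (an integration by parts inside (\ref{eq34}) gives $\mathcal{W}_{\psi}(\partial_{j}f)(x,y)=-y^{-1}\mathcal{W}_{\partial_{j}\psi}f(x,y)$), one sees that $\partial_{j}\psi$ still has all the vanishing moments required for a LP-pair of order $\alpha-1$, so the factor $y^{-1}$ shifts the exponent from $\alpha$ to $\alpha-1$ exactly. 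For the low-frequency term one uses $(\partial_{j}f)\ast\phi=f\ast\partial_{j}\phi$ and Lemma \ref{zl1} (estimate (\ref{fp}) with $\theta=\partial_{j}\phi$) to bound $\|(\partial_{j}f)\ast\phi\|_{L^{\infty}}$ by $C\|f\|_{C^{\alpha,L}_{\ast}}$. Combining gives $\left\|\partial_{j}f\right\|_{C^{\alpha-1,L}_{\ast}}\leq C\left\|f\right\|_{C^{\alpha,L}_{\ast}}$, and iterating yields the claim for general $m$.

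\emph{Part (ii).} Here the idea is to run the argument of (i) in reverse, using the reconstruction formula (\ref{eqLP}) to control $\mathcal{W}_{\psi}f$ at small scales from the hypotheses on the $\partial_{j}f$. For a LP-pair $(\phi,\psi)$ of order $\alpha$, since $\mu_{0}(\psi)=0$ one may write $\psi=\sum_{j=1}^{n}\partial_{j}\chi_{j}$ with $\chi_{j}\in\mathcal{S}_{0}(\mathbb{R}^{n})$ having enough vanishing moments (solve $\widehat{\psi}(\xi)=\sum_{j}i\xi_{j}\widehat{\chi_{j}}(\xi)$ smoothly near and away from the origin using that $\widehat{\psi}$ vanishes to high order at $0$); then $\mathcal{W}_{\psi}f(x,y)=-y\sum_{j}\mathcal{W}_{\chi_{j}}(\partial_{j}f)(x,y)$, and applying Lemma \ref{zl1} (estimate (\ref{sp})) to each $\partial_{j}f\in C^{\alpha-1,L}_{\ast}(\mathbb{R}^{n})$ gives $|\mathcal{W}_{\psi}f(x,y)|\leq C y\cdot y^{\alpha-1}L(y)\sum_{j}\left\|\partial_{j}f\right\|_{C^{\alpha-1,L}_{\ast}}=Cy^{\alpha}L(y)(\cdots)$. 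Together with the assumption $f\ast\phi\in L^{\infty}$, this is precisely (\ref{eq7}), so $f\in C^{\alpha,L}_{\ast}(\mathbb{R}^{n})$.

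\emph{Part (iii).} Since $(1-\Delta)^{\beta/2}(1-\Delta)^{-\beta/2}=\mathrm{Id}$, it suffices to prove that $(1-\Delta)^{\beta/2}:C^{\alpha,L}_{\ast}\to C^{\alpha-\beta,L}_{\ast}$ is continuous for every real $\beta$. Set $g=(1-\Delta)^{\beta/2}f$. For the low-frequency part, $g\ast\phi=f\ast\big((1-\Delta)^{\beta/2}\phi\big)$ and $(1-\Delta)^{\beta/2}\phi\in\mathcal{S}(\mathbb{R}^{n})$, so Lemma \ref{zl1} bounds $\|g\ast\phi\|_{L^{\infty}}$ by $C\|f\|_{C^{\alpha,L}_{\ast}}$. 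For the wavelet part, fix a LP-pair $(\phi,\psi)$ of order $\max(\alpha,\alpha-\beta)$ and write, on the Fourier side, $\mathcal{W}_{\psi}g(x,y)=\mathcal{W}_{\psi^{y}}f(x,y)$ where $\widehat{\psi^{y}}(\xi)=(1+|\xi/y|^{2})^{\beta/2}\overline{\widehat{\psi}}(\xi)$; the point is that $y^{-\beta}(y^{2}+|\xi|^{2})^{\beta/2}$ times a Schwartz bump supported away from $0$ in $\xi$ produces, after rescaling, a family $\{y^{\beta}\psi^{y}\}$ that — while not a single wavelet — can be analyzed by splitting into $|\xi|\lesssim y$ and $|\xi|\gtrsim y$ and exploiting the high vanishing order of $\widehat{\psi}$ at the origin. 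I anticipate that the cleanest route is instead to use the reconstruction formula (\ref{eqLP}) directly: express $\mathcal{W}_{\bar\eta}\theta_{2}$-type kernels for $g$ and move $(1-\Delta)^{\beta/2}$ onto the auxiliary test functions, reducing everything to boundedness of a fixed family in $\mathcal{S}(\mathbb{H}^{n+1})$ via Theorem \ref{th1}, exactly as in the proof of Lemma \ref{zl1}, with Potter's bound absorbing the extra $a^{-\beta}L(ay)/L(y)$ factor. \textbf{The main obstacle} is this last point: controlling the Fourier multiplier $(1-\Delta)^{\beta/2}$ at the level of the continuous wavelet transform for \emph{non-integer and negative} $\beta$, since $(y^{2}+|\xi|^{2})^{\beta/2}$ is not a polynomial and the naive reduction used in parts (i)–(ii) no longer shifts the homogeneity by an integer; the resolution is to absorb the multiplier into the test-function side of (\ref{eqLP}) and invoke the continuity theorem, so that only boundedness of a parametrized Schwartz family — not a pointwise symbol estimate — is needed.
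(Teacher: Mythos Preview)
Your part (i) is essentially the paper's proof: the same identity $\mathcal{W}_{\psi}(\partial_{j}f)=-y^{-1}\mathcal{W}_{\partial_{j}\psi}f$ together with (\ref{fp}) for $\theta=\partial_{j}\phi$ and (\ref{sp}) for $\theta=\partial_{j}\psi$.

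For part (ii), your decomposition $\psi=\sum_{j}\partial_{j}\chi_{j}$ with $\chi_{j}\in\mathcal{S}(\mathbb{R}^{n})$ having moments vanishing up to order $[\alpha]-1$ is viable, but the paper takes a shorter route that avoids constructing the $\chi_{j}$. It simply observes that $(\phi,\Delta\psi)$ is again a LP-pair of order $\alpha$, uses part (i) to deduce $\Delta f=\sum_{j}\partial_{j}(\partial_{j}f)\in C^{\alpha-2,L}_{\ast}(\mathbb{R}^{n})$ from the hypothesis, and then invokes the single identity
\[
\frac{1}{y^{\alpha}L(y)}\,\mathcal{W}_{\Delta\psi}f(x,y)=\frac{1}{y^{\alpha-2}L(y)}\,\mathcal{W}_{\psi}(\Delta f)(x,y),
\]
which gives the wavelet bound at once. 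Your approach buys a direct first-order reduction; the paper's buys brevity by recycling (i).

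For part (iii), you correctly identify the issue but over-engineer the fix. The family you already wrote down, $\{y^{\beta}\psi^{y}\}$, is precisely $\theta_{y}:=(y^{2}-\Delta)^{\beta/2}\psi$, and the paper's whole point is that $\mathfrak{B}=\{\theta_{y}:0<y\leq1\}$ is a \emph{bounded} subset of $\mathcal{S}(\mathbb{R}^{n})$ with $\mu_{m}(\theta_{y})=0$ for every $|m|\leq[\alpha]$ (by Leibniz on $\hat{\theta}_{y}(u)=(y^{2}+|u|^{2})^{\beta/2}\hat{\psi}(u)$, the vanishing of $\partial^{m}\hat{\psi}(0)$ absorbs the bad behaviour of the multiplier at the origin as $y\to0^{+}$). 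Thus estimate (\ref{sp}) of Lemma \ref{zl1} applies \emph{directly} to this family, yielding
\[
\bigl|\mathcal{W}_{\psi}\bigl((1-\Delta)^{\beta/2}f\bigr)(x,y)\bigr|=y^{-\beta}\bigl|\mathcal{W}_{\theta_{y}}f(x,y)\bigr|\leq C\,y^{\alpha-\beta}L(y)\,\|f\|_{C^{\alpha,L}_{\ast}}
\]
in one line. There is no need to revisit the reconstruction formula or re-invoke Potter's estimate; that machinery is already packaged inside Lemma \ref{zl1}, and the only thing to check is boundedness of $\mathfrak{B}$ in $\mathcal{S}(\mathbb{R}^{n})$.
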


\begin{proof}
(i) It is enough to consider $\partial_{j}$. We have that $\partial_{j} f\ast \phi=f\ast \partial_{j}\phi$ and $\mathcal{W}_{\psi}\partial_{j}{f}(x,y)=-y^{-1}\mathcal{W}_{\partial_{j}\psi}{f}(x,y)$. Thus, the result follows at once by applying (\ref{fp}) with $\theta=\partial_{j}\phi$ and (\ref{sp}) with $\theta=\partial_{j}\psi$.

(ii) If $(\phi,\psi)$ is a LP-pair, so is $(\phi,\Delta \psi)$. Note that our assumption and (i) imply that $\Delta f\in C^{\alpha-2,L}_{\ast}(\mathbb{R}^{n})$. In view of Proposition \ref{propindZ}, it remains to observe that
$$
\frac{1}{y^\alpha L(y)} \mathcal{W}_{\Delta \psi} f (x, y)
= \frac{1}{y^{\alpha-2} L(y)}\mathcal{W}_{\psi} (\Delta f) (x, y).
$$

(iii) Since $(1-\Delta)^{-\beta/2}$ is the inverse of $(1-\Delta)^{\beta/2}$, it suffices to show that $(1-\Delta)^{\beta/2}$ maps continuously $C^{\alpha, L}_{\ast} (\mathbb{R}^n)$ into $C^{\alpha -\beta, L}_{\ast}
(\mathbb{R}^n)$. Using (\ref{fp}) with $\theta=(1-\Delta)^{\beta/2}\phi$, we obtain that $\left\|(1-\Delta)^{\beta/2}f \ast\phi\right\|_{L^{\infty}}\leq C \left\|f\right\|_{C^{\alpha,L}_{\ast}}$. We also have
\begin{align*}
\mathcal{W}_{\psi}(1-\Delta)^{\beta/2}f(x,y)&=\frac{1}{(2\pi)^{n}y^{\beta}}\left\langle \hat{f}(\xi),e^{ix\cdot \xi}(y^{2}+\left|y\xi\right|^{2})^{\beta/2}\overline{\hat{\psi}}(y\xi)\right\rangle
\\
&
=\frac{1}{y^{\beta}}\mathcal{W}_{\theta_{y}}f(x,y),  \ \ \ \mbox{where }\theta_{y}=(y^{2}-\Delta)^{\beta/2}\psi.
\end{align*}
Finally, we can apply (\ref{sp}) because $\mathfrak{B}=\left\{(y^{2}-\Delta)^{\beta/2}\psi\right\}_{y\in(0,1]}$ is a bounded set in $\mathcal{S}(\mathbb{R}^{n})$ and $\mu_{m}(\theta_{y})=0$ for each multi-index $\left|m\right|\leq[\alpha]$.
\end{proof}

We can also use Proposition \ref{pLP} to show that $C_{\ast}^{\alpha,L}(\mathbb{R}^{n})$ is a Banach space, as stated in the following proposition.

\begin{proposition}
\label{pBZ} The space $C_{\ast}^{\alpha,L}(\mathbb{R}^{n})$ is a Banach space when provided with the norm $(\ref{eq7})$.
\end{proposition}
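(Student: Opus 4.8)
The plan is to verify the two defining properties of a Banach space: that $\|\cdot\|_{C^{\alpha,L}_{\ast}}$ is genuinely a norm (separating points), and that the space is complete. Throughout I would work with a fixed LP-pair $(\phi,\psi)$ of order $\alpha$, which is harmless in view of Proposition \ref{propindZ}.

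For the norm axioms, the only nontrivial point is that $\|f\|_{C^{\alpha,L}_{\ast}}=0$ forces $f=0$ in $\mathcal{S}'(\mathbb{R}^{n})$. Here I would invoke the reconstruction formula \eqref{eqLP} of Proposition \ref{pLP}: for every $\theta\in\mathcal{S}(\mathbb{R}^{n})$, $\langle f,\theta\rangle$ is expressed as an integral involving $f\ast\phi$ and $\mathcal{W}_{\psi}f(x,y)$ on $0<y\leq 1$; if both vanish identically, then $\langle f,\theta\rangle=0$ for all $\theta$, hence $f=0$. The homogeneity and triangle inequality are immediate from the definition since both $f\mapsto f\ast\phi$ and $f\mapsto\mathcal{W}_{\psi}f$ are linear.

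For completeness, let $(f_{N})_{N}$ be a Cauchy sequence in $C^{\alpha,L}_{\ast}(\mathbb{R}^{n})$. Then $(f_{N}\ast\phi)_{N}$ is Cauchy in $L^{\infty}(\mathbb{R}^{n})$, so it converges uniformly to some $g\in L^{\infty}(\mathbb{R}^{n})$; and for each fixed $(x,y)\in\mathbb{H}^{n+1}$ the scalars $\mathcal{W}_{\psi}f_{N}(x,y)$ form a Cauchy sequence (with the weight $y^{\alpha}L(y)$ bounded on compact subsets of $0<y\leq1$ thanks to our standing hypotheses on $L$), converging to a limit $G(x,y)$, the convergence being uniform on $\mathbb{R}^{n}\times[y_{0},1]$ for each $y_{0}>0$. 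The natural candidate for the limit is the distribution $f$ defined through the right-hand side of \eqref{eqLP} with $f\ast\phi$ replaced by $g$ and $\mathcal{W}_{\psi}f$ replaced by $G$; one then checks, using \eqref{eqLP} applied to $f_{N}$ and passing to the limit (the integrand over $0<y\leq1$ is dominated because $|\mathcal{W}_{\psi}f_{N}(x,y)|\leq \|f_{N}\|_{C^{\alpha,L}_{\ast}}\,y^{\alpha}L(y)$ uniformly in $N$, and $\mathcal{W}_{\bar\eta}\theta_{2}\in\mathcal{S}(\mathbb{H}^{n+1})$ decays fast), that this $f$ lies in $\mathcal{S}'(\mathbb{R}^{n})$, that $f\ast\phi=g$ and $\mathcal{W}_{\psi}f=G$, and finally that $\|f-f_{N}\|_{C^{\alpha,L}_{\ast}}\to 0$ by taking limits in the Cauchy estimates (using that $\sup$ of a pointwise limit is at most the liminf of the sups).

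The main obstacle is the verification that the limiting object $f$ really is a tempered distribution and that the operations ``convolve with $\phi$'' and ``take $\mathcal{W}_{\psi}$'' commute with the limiting procedure, i.e. that $f$ reproduces the prescribed data $g$ and $G$. This is where Proposition \ref{pLP} does the essential work: it identifies $C^{\alpha,L}_{\ast}(\mathbb{R}^{n})$ with a closed subspace of the Banach space $L^{\infty}(\mathbb{R}^{n})\times Y$, where $Y$ is the weighted sup-normed space of continuous functions on $\mathbb{H}^{n+1}$ arising as admissible $\mathcal{W}_{\psi}f$, the embedding being $f\mapsto(f\ast\phi,\mathcal{W}_{\psi}f)$; closedness of the image follows because a pair $(g,G)$ in the closure still satisfies the compatibility constraints defining the image (these are given by continuous linear conditions, e.g. $G$ must be of the form $\mathcal{W}_{\psi}$ of the distribution built via \eqref{eqLP}), and then \eqref{eqLP} shows the map is also bounded below, hence a topological embedding onto a closed subspace, which is therefore complete.
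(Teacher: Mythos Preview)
Your proposal is correct and follows essentially the same route as the paper: both define the candidate limit $f$ via the reconstruction formula \eqref{eqLP} with the limiting data $g$ and $G$, use \eqref{eqLP} for the $f_{N}$ to obtain $f_{N}\to f$ in $\mathcal{S}'(\mathbb{R}^{n})$, and then conclude $f\ast\phi=g$, $\mathcal{W}_{\psi}f=G$ by pointwise passage to the limit. Your additional discussion of the norm axioms (point-separation via \eqref{eqLP}) and the closed-subspace viewpoint is a helpful gloss but not a different argument.
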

\begin{proof}
Let $\eta$, $\varphi$, $\theta_{1}$, $\theta_{2}$ be as in the statement of Proposition \ref{pLP}. Suppose that $\left\{f_{j}\right\}_{j=0}^{\infty}$ is a Cauchy sequence in $C_{\ast}^{\alpha,L}(\mathbb{R}^{n})$. Then, there exist continuous functions $g\in L^{\infty}(\mathbb{R}^{n})$ and $G$ defined on $\mathbb{R}^{n}\times (0,1]$ such that $f_{j}\ast \phi\to g$ in $L^{\infty}(\mathbb{R}^{n})$ and
$$
\lim_{j\to\infty}\sup_{y\in(0,1]}\sup_{x\in\mathbb{R}^{n}}\frac{1}{y^{\alpha}L(y)}\left|\mathcal{W}_{\psi}f_{j}(x,y)-G(x,y)\right|=0.
$$
We define the distribution $f\in\mathcal{S}'(\mathbb{R}^{n})$ whose action on test functions $\theta\in\mathcal{S}(\mathbb{R}^{n})$ is given by
$$
\left\langle f,\theta\right\rangle:=\int_{\mathbb{R}^{n}}g(t)\theta_{1}(t)\mathrm{d}t\: +\frac{1}{c_{\psi,\eta}}\int_{0}^{1}\int_{\mathbb{R}^{n}}G(x,y)\mathcal{W}_{\bar{\eta}}\theta_{2}(x,y)\frac{\mathrm{d}x\mathrm{d}y}{y}.
$$
Since the $f_{j}$ have the representation (\ref{eqLP}), we immediately see that $f_{j}\to f$ in $\mathcal{S}'(\mathbb{R}^{n})$. Thus, pointwisely,
$$(f\ast\phi)(t)=\lim_{j\to\infty}(f_{j}\ast \phi)(t)=g(t)$$
and
$$
\mathcal{W}_{\psi}f(x,y)=\lim_{j\to\infty}\mathcal{W}_{\psi}f_{j}(x,y)=G(x,y).
$$
This implies that $\lim_{j\to\infty}\left\|f-f_{j}\right\|_{C^{\alpha,L}_{\ast}}=0$, and so  $C^{\alpha,L}_{\ast}(\mathbb{R}^{n})$ is complete.
\end{proof}

We have arrived to the main and last result of this section. It provides the $L$-H\"{o}lderian characterization of the $L$-Zygmund spaces of positive exponent. We shall use in its proof a technique based on the Tauberian theorem for pointwise weak regularity of vector-valued distributions, explained in Subsection \ref{sphs}. We denote below by $C_{b}(\mathbb{R}^{n})$ the Banach space of continuous and bounded functions.
\begin{theorem} \label{th5} Let $\alpha>0$.
\begin{itemize}
\item [(a)] If $\alpha \notin \mathbb{N}$, then $C^{\alpha, L}_{\ast} (\mathbb{R}^n) = C^{\alpha, L}
(\mathbb{R}^n).$ Moreover, the norms $(\ref{eq7})$ and $(\ref{eqnH})$ are equivalent.
\smallskip

\item [(b)] If $ \alpha = p+1 \in \mathbb{N}, $ then
$C^{p+1, L}_{\ast} (\mathbb{R}^n) $ consists of functions with continuous derivatives up to order $p$ such that
\begin{equation} \label{eq9}
\sum_{\left|m\right|\leq p}\|\partial^{m}f\|_{L^{\infty}} + \sum_{\left|m\right|=p}\underset{0 < |h| \leq 1}{\sup_{t \in \mathbb{R}^n}} \frac{|\partial^{m}f(t + h) + \partial^{m}f(t - h)-2 \partial^{m}f(t)|}{|h|
L(|h|)} < \infty.
\end{equation}
In addition, $(\ref{eq9})$ produces a norm that is equivalent to $(\ref{eq7})$.
\end{itemize}
\end{theorem}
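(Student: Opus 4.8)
The plan is to deduce Theorem \ref{th5} from the Tauberian machinery of Subsection \ref{sphs} by recognizing the $L$-Zygmund norm as a \emph{uniform} version of the pointwise weak H\"older condition. The key observation is this: using the LP-pair coming from a Littlewood--Paley decomposition of the unity (permitted by Proposition \ref{propindZ}) and the size estimate $\left|\mathcal{W}_\psi f(x,y)\right|\leq C y^{\alpha}L(y)$ in (\ref{eq7}), one should show that $f\in C^{\alpha,L}_{\ast}(\mathbb{R}^{n})$ is equivalent to the statement that the $C_{b}(\mathbb{R}^{n})$-valued distribution $t\mapsto f(t+\cdot)$ (equivalently, a suitable vector-valued realization of $f$ via translations) lies in $C^{\alpha,L}_{\ast,w}(0,E)$ with $E=C_{b}(\mathbb{R}^{n})$, \emph{with seminorm bounds uniform in the translation}. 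Concretely, first I would record that for $\varphi\in\mathcal{S}(\mathbb{R}^{n})$ with $\mu_m(\varphi)=0$ for $|m|\leq[\alpha]$, the estimate $\sup_{0<y\leq1}\sup_{x}\frac{1}{y^{\alpha}L(y)}\left|\mathcal{W}_\varphi f(x,y)\right|\leq C\|f\|_{C^{\alpha,L}_{\ast}}$ holds by Lemma \ref{zl1} applied to the bounded set $\mathfrak{B}=\{\varphi\}$; this gives one direction of the wavelet characterization, and the converse direction (controlling $\mathcal{W}_\psi f$ for the specific LP wavelet) is immediate.

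Next I would translate the wavelet bound back into a pointwise statement. Fix $x_0\in\mathbb{R}^{n}$ and consider the translate $f_{x_0}=f(x_0+\cdot)$ as an element of $\mathcal{S}'(\mathbb{R}^{n})$; the uniform-in-$x$ bound on $\mathcal{W}_\psi f$ says exactly that the family $\{f_{x_0}\}_{x_0}$ satisfies the hypothesis of Theorem \ref{tauberian} uniformly, hence each $f_{x_0}\in C^{\alpha,L}_{\ast,w}(0)$ with a uniform constant. By the weak-expansion description recalled at the end of Subsection \ref{sphs}: for $\alpha=p+1$ one gets the \L ojasiewicz point values $\partial^m f(x_0)$ for $|m|\leq p$ together with the second-difference-type remainder, and for $\alpha\notin\mathbb{N}$ one gets the Taylor-type expansion $f(x_0+\varepsilon t)=\mathbf{P}_{x_0}(\varepsilon t)+O(\varepsilon^{\alpha}L(\varepsilon))$. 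The standard device for passing from \L ojasiewicz point values with uniform bounds to genuine pointwise (classical) derivatives with the stated modulus of continuity is to test against a fixed $\varphi\in\mathcal{D}$ with $\int\varphi=1$ and compare $f_{x_0}$ with $f_{x_1}$ for $|x_0-x_1|$ small; the uniform $O(\varepsilon^{\alpha}L(\varepsilon))$ control and the regularity of $f\ast\phi$ yield that $\partial^m f$ is continuous and that (\ref{eqnH}) (resp. (\ref{eq9})) holds, with the relevant constant controlled by $\|f\|_{C^{\alpha,L}_{\ast}}$. This handles the inclusion $C^{\alpha,L}_{\ast}\subseteq C^{\alpha,L}$ (resp. $\subseteq$ the space in (b)), and the reverse inclusion with norm control is the already-noted easy estimate plus the observation from Subsection \ref{sphs} that (\ref{pseq}) implies the wavelet bound, now run uniformly in $x_0$.

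The main obstacle I anticipate is \emph{the uniformity in the base point}: Theorem \ref{tauberian} is a pointwise statement, and one must check that its use of the constant $k$ and the $\limsup$ can be made independent of $x_0$ when the wavelet bound (\ref{eq7}) is assumed uniformly — this is where the precise form of the Tauberian argument from \cite{vindas-pilipovic} matters, and it is plausible that one instead works directly with the translation-invariant Banach-valued distribution $\mathbf{f}\in\mathcal{S}'(\mathbb{R}^{n},C_{b}(\mathbb{R}^{n}))$ given by $\langle\mathbf{f},\varphi\rangle = \int f(\cdot+t)\varphi(t)\,\mathrm{d}t$, for which a single application of the vector-valued Theorem \ref{tauberian} at $x_0=0$ already encodes the uniformity. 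A secondary technical point is the treatment of the borderline case $\alpha=p+1\in\mathbb{N}$: here one must verify that the second-difference quotient in (\ref{eq9}), rather than the first-difference quotient of $\partial^m f$ with $|m|=p$, is the correct quantity — this is exactly the discrepancy between $C^{\alpha,L}_{w}$ and $C^{\alpha,L}_{\ast,w}$ flagged in Subsection \ref{sphs}, and the weak-expansion with the extra term $\varepsilon^{p}\sum_{|m|=p}t^m\mathbf{c}_m(\varepsilon)$ is precisely what produces the symmetric second difference after one integrates against a test function supported near the relevant points.
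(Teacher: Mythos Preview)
Your proposal is correct and follows essentially the same approach as the paper: the paper indeed passes to the $C_b(\mathbb{R}^n)$-valued distribution $\mathbf{f}(t)(\xi)=f(t+\xi)$, applies the vector-valued Tauberian Theorem \ref{tauberian} once at $x_0=0$ (exactly the device you flag as the resolution of the uniformity obstacle), extracts the \L ojasiewicz point values to identify $\partial^m f\in C_b(\mathbb{R}^n)$ for $|m|<\alpha$, and then obtains the first- or second-difference estimates by evaluating the weak expansion on compact families of shifted test functions $\theta(\cdot)-\theta(\cdot-\omega)$ (case (a)) or $\theta(\cdot+\omega)+\theta(\cdot-\omega)-2\theta$ (case (b)). The only minor divergence is that the paper invokes Potter's estimate directly rather than Lemma \ref{zl1} to feed the wavelet bound into Theorem \ref{tauberian}, and it obtains the norm equivalence via the open mapping theorem rather than tracking constants.
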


\begin{proof} Observe that the $L$-H\"{o}lderian type norm (resp. (\ref{eq9})) is clearly stronger than (\ref{eq7}). Thus, if we show the equality of the spaces in (a) and (b), the equivalence of norms would be a direct consequence of the open mapping theorem.

Suppose that $f\in C^{\alpha,L}_{\ast}(\mathbb{R}^{n})$. Consider the $C_{b}(\mathbb{R}^{n})$-valued distribution $ \mathbf{f} \in {\mathcal
S}'(\mathbb{R}_{t}^{n}, C_{b}(\mathbb{R}_{\xi}^{n})) $ given by
$\mathbf{f} (t) (\xi) := f(t + \xi)$, i.e., the one whose action on test functions is given by
$$\langle \mathbf{f}(t), \theta(t) \rangle (\xi) =
\langle f(t + \xi), \theta(t) \rangle = (f \ast \check{\theta})(\xi), \ \ \ \theta \in {\mathcal S} (\mathbb{R}^n),\: \xi\in\mathbb{R}^{n}.$$
It does take values in $C_{b}(\mathbb{R}^{n})$ because of (\ref{fp}). Clearly,
$
\mathcal{W}_{\psi}\mathbf{f} (x, y)(\xi) = \mathcal{W}_{\psi}f (x + \xi, y).
$
By (\ref{eq7}) and Potter's estimate \cite[p. 25]{bingham}, we have that
\begin{equation*}
\label{estw}
\left\|\mathcal{W}_{\psi}\mathbf{f}(\varepsilon x,\varepsilon y)\right\|_{C_{b}(\mathbb{R}^{n})}\leq C \varepsilon^{\alpha}L(\varepsilon)y^{\alpha-1} \ \ \ \mbox{ for all } \varepsilon\in(0,1),\ (x,y)\in\mathbb{R}^{n}\times(0,1].
\end{equation*}
Therefore, the Tauberian Theorem \ref{tauberian} yields $
\mathbf{f} \in C_{\ast,w}^{\alpha,L}(0, C_b(\mathbb{R}^{n}))$. Now, the \L ojasiewicz point values $\partial^{m}\mathbf{f}(0)=v_{m}\in C_{b}(\mathbb{R}^{n})$ exist, distributionally, for $\left|m\right|<\alpha$. It explicitly means that for all $\theta\in\mathcal{S}(\mathbb{R}^{n})$
$$
\lim_{\varepsilon \to 0^+} \partial^{m} f \ast \check{\theta}_{\varepsilon} =
\lim_{\varepsilon \to 0^+} \langle \partial^{m}\mathbf{f}(\varepsilon t),
\theta (t) \rangle= \mu_{0}(\theta)v_m \ \ \ \mbox{in }C_{b}(\mathbb{R}_{\xi}^{n}),
$$
where $\check{\theta}_{\varepsilon}(t)=\varepsilon^{-n}\theta\left(-t/\varepsilon\right)$. If we now take $\theta$ with $\mu_{0}(\theta)=1$, we then conclude that $\partial^{m}f=v_{m}\in C_{b}(\mathbb{R}^{n})$ for each $\left|m\right|<\alpha$. It remains in both cases to deal with the estimates for $\partial^{m}f$; notice that $\partial^{m}\mathbf{f}\in C^{\alpha-[\alpha],L}_{\ast,w}(0,C_{b}(\mathbb{R}^{n}))$ when $\left|m\right|=[\alpha]$ and $\partial^{m}\mathbf{f}\in C^{1,L}_{\ast,w}(0,C_{b}(\mathbb{R}^{n}))$ when $|m|=p$. We now divide the proof into two cases:

\emph{Case $\alpha\notin\mathbb{N}$.} Fix a multi-index $\left|m\right|=[\alpha]$ . It suffices to show
$$
\sup_{0<\left|x-t\right|<1} \frac{\left|\partial^{m}f(x)-\partial^{m}f(t)\right|}{\left|x-t\right|^{\alpha-[\alpha]}L(\left|x-t\right|)}<\infty.
$$
We had already seen that
$\partial^{m}\mathbf{f}(t)(\xi)=\partial^{m}f(t+\xi)\in
C^{\alpha-[\alpha],L}_{w}(0,C_{b}(\mathbb{R}_{\xi}^{n}))=C^{\alpha-[\alpha],L}_{\ast,w}(0,C_{b}(\mathbb{R}_{\xi}^{n}))$, i.e.,
$$\mu_{0}(\theta)\partial^{m} f(\xi)-\int_{\mathbb{R}^{n}}\partial^{m}f(\xi+\varepsilon
t)\theta(t)\mathrm{d}t=O(\varepsilon^{\alpha-[\alpha]}L(\varepsilon)), \ \ \ \varepsilon\to0^{+},$$
in the space $C_{b}(\mathbb{R}_{\xi}^{n})$, for each $\theta\in\mathcal{S}(\mathbb{R}^{n})$. Hence, if
$0<\left|h\right|\leq1$, we choose $\theta$ as before ($\mu_{0}(\theta)=1$),
and we use the fact that
$\left\{\theta-\theta(\:\cdot\:-\omega): \left|\omega\right|=1\right\}$
is compact in $\mathcal{S}(\mathbb{R}^{n})$; we then have
\begin{align*}
&\sup_{\xi\in\mathbb{R}^{n}}\left|\partial^{m}f(\xi+h)-\partial^{m}f(\xi)\right|
\leq2\sup_{\xi\in\mathbb{R}^{n}}\left|\partial^{m}f(\xi)-\int_{\mathbb{R}^{n}} \partial^{m}f(\left|h\right|t+\xi)\theta(t)\mathrm{d}t\right|
\\
&+\sup_{\xi\in\mathbb{R}^{n}}\left|\int_{\mathbb{R}^{n}}\partial^{m}f(\xi+\left|h\right|t)(\theta(t)-\theta\left(t-|h\right|^{-1}h))\mathrm{d}t\right|
=O(\left|h\right|^{\alpha-[\alpha]}L(\left|h\right|)),
\end{align*}
and this completes the proof of (a).

\emph{Case $\alpha=p+1\in \mathbb{N}$.} The proof is similar to that of (a). Fix now $\left|m\right|=p$. We now have $\partial^{m}\mathbf{f}\in
C^{1,L}_{\ast,w}(0,C_{b}(\mathbb{R}^{n}))$, which, as commented in Subsection \ref{sphs}, yields the distributional expansion
\begin{equation}
\label{eq8} \partial^{m}\mathbf{f} (\varepsilon t)(\xi) = \partial^{m}f(\xi)+
\varepsilon \sum_{j = 1}^{n} t_{j} \mathbf{c}_{j} (\varepsilon,\xi) +O
\left(\varepsilon L(\varepsilon)\right),  \ \ \ 0<\varepsilon\leq 1,
\end{equation}
in $\mathcal{S}'(\mathbb{R}^{n}_{t},C_{b}(\mathbb{R}^{n}_{\xi}),$ where the $\mathbf{c}_{j}(\varepsilon,\:\cdot\:)$ are continuous $C_b(\mathbb{R}_{\xi}^{n})$-valued functions in $\varepsilon$. We apply (\ref{eq8}) on
a test function $ \theta \in \mathcal{S} (\mathbb{R}^n), $ with $\mu_{0}(\theta) = 1,$ and
$\int_{\mathbb{R}^n} t_j \theta(t)\:\mathrm{d}t = 0$ for $j = 1, \dots, n$, so we get
\begin{equation} \label{eq10}
\partial^{m}f (\xi) = \int_{\mathbb{R}^{n}}\partial^{m}f (\xi + |h| t) \theta(t) \mathrm{d}t + O \left(|h|
L(|h|)\right), \ \ \ 0<\left|h\right|\leq 1,
\end{equation}
uniformly in $\xi\in\mathbb{R}^{n}$. Since
$
\left\{\theta_{\omega}:=\theta(\:\cdot\:+\omega)+\theta(\:\cdot\:-\omega)-2\theta: \left|\omega\right|=1\right\}
$
is compact in $\mathcal{S}(\mathbb{R}^{n})$ and $\mu_{m}(\theta_{\omega})=0$ for $\left|m\right|\leq 1$, the relations (\ref{eq8})  and (\ref{eq10}) give
\begin{align*}
&\sup_{\xi\in\mathbb{R}^{n}}|\partial^{m}f (\xi + h) + \partial^{m}f (\xi - h) - 2\partial^{m}f (\xi)|
\\
&\leq 3\sup_{\xi\in\mathbb{R}^{n}}\left|\partial^{m}f(\xi)-\int_{\mathbb{R}^{n}} \partial^{m}f(\left|h\right|t+\xi)\theta(t)\mathrm{d}t\right|
\\
&
 \ \ \ +\left|\int_{\mathbb{R}^{n}} \partial^{m}f(\xi + |h| t) (\theta(t+\left|h\right|^{-1}h)+\theta(t-\left|h\right|^{-1}h)-2\theta(t)) \mathrm{d}t
\right|
\\
&
=O(\left|h\right|L(\left|h\right|)), \ \ \ 0<\left|h\right|\leq 1,
\end{align*}
as claimed.
\end{proof}


\begin{thebibliography}{99}


\bibitem{bingham} N.~H.~Bingham, C.~M.~Goldie, J.~L.~Teugels, \textit{Regular variation,}
Encyclopedia of Mathematics and its Applications 27, Cambridge
University Press, Cambridge, 1989.


\bibitem {estrada-kanwal2} R. Estrada, R. P. Kanwal, \textit{A distributional
approach to asymptotics. Theory and applications,} second edition,
Birkh\"{a}user, Boston, 2002.

\bibitem{estrada-vindasI} R. Estrada, J. Vindas, \textit{A general integral,} Dissertationes Math. \textbf{483} (2012), 1--49.

\bibitem{hol93} M. Holschneider, \emph{Localization properties of wavelet transforms,} J. Math. Phys. \textbf{34} (1993), 3227–-3244.

\bibitem{hol94} M. Holschneider,
\emph{More on the analysis of local regularity through wavelets,}
J. Statist. Phys. \textbf{77} (1994), 807-–840.

\bibitem{hol1} M. Holschneider, \textit{Wavelets. An analysis tool,}
The Clarendon Press, Oxford University Press, New York, 1995.

\bibitem{horm} L. H\"{o}rmander, \textit{Lectures on nonlinear hyperbolic differential equations,}
Springer-Verlag, Berlin Heidelberg, 1997.

\bibitem{jaffard-meyer} S. Jaffard, Y. Meyer, \textit{Wavelet methods for pointwise
regularity and local oscillations of functions}, Memoirs of the AMS, vol.123, No 587, 1996.

\bibitem {lojasiewicz} S. \L ojasiewicz, \textit{Sur la valeur et la limite d'une distribution
en un point}, Studia Math. \textbf{16} (1957), 1--36.

\bibitem {lojasiewicz2} S. \L ojasiewicz, \emph{Sur la fixation des variables dans une
distribution,} Studia Math. \textbf{17} (1958), 1--64.

\bibitem {meyer} Y. Meyer, \textit{Wavelets, vibrations and
scalings}, CRM Monograph series 9, AMS,
Providence, 1998.

\bibitem{p-s-v} S. Pilipovi\'{c}, B. Stankovi\'{c}, J. Vindas, \emph{Asymptotic behavior of generalized functions,} Series on Analysis, Applications and Computations, 5., World Scientific Publishing Co., Hackensack, NJ, 2011


\bibitem{vindas-pilipovic} S. Pilipovi\'{c},  J. Vindas, \textit{Multidimensional Tauberian
theorems for wavelet and non-wavelet transforms,} submitted (preprint:  arXiv:1012.5090v2).

\bibitem{samko} S. G. Samko, \textit{Hypersingular integrals and their applications,} Taylor \& Francis, New
York, 2002.

\bibitem {schwartz1} L. Schwartz, \textit{Th\'{e}orie des distributions}, Hermann,
Paris, 1966.

\bibitem{silva} J. Sebasti\~{a}o e Silva, \textit{Sur la d\'{e}finition et la structure
des distributions vectorielles,} Portugal. Math. \textbf{19}
(1960), 1--80.

\bibitem {seneta} E. Seneta, \textit{Regularly varying functions,} Lecture
Notes in Mathematics, 598, Springer Verlag, Berlin, 1976.

\bibitem {stein} E. M. Stein, \textit{Singular integrals and
differentiability properties of functions}, Princeton University
Press, Princeton, N.J., 1970.


\bibitem {treves} F. Tr\`{e}ves, \textit{Topological vector spaces,
distributions and kernel}, Academic Press, N.Y., 1967.


\bibitem{vindasLP} J. Vindas, \emph{Generalized Littlewood-Paley pairs and vector-valued distributions,} preprint, 2012.

\bibitem{vindas-estrada1} J. Vindas, R. Estrada, \textit{Distributionally regulated functions,}
Studia Math. \textbf{181} (2007), 211--236.

\bibitem{vindas-pilipovic2009} J. Vindas, S. Pilipovi\'{c}, \emph{Structural theorems for quasiasymptotics of distributions at the origin,} Math. Nachr. \textbf{282} (2009), 1584--1599.

\bibitem{vindas-p-r} J. Vindas, S. Pilipovi\'{c}, D. Raki\'{c}, \textit{Tauberian theorems for the wavelet
transform,} J. Fourier Anal. Appl. \textbf{17} (2011), 65--95.

\bibitem {vladimirov-d-z1} V. S. Vladimirov, Y. N. Drozhzhinov, B. I. Zavialov,
\textit{Tauberian theorems for generalized functions,} Kluwer
Academic Publishers Group, Dordrecht, 1988.

\bibitem {walterw} G. Walter, \textit{Pointwise convergence of wavelet expansions,} J. Approx. Theory \textbf{80} (1995), 108--118.

\end{thebibliography}
\end{document}